\theoremstyle{plain}
\newtheorem{theorem}{Theorem}[section]
\newtheorem{proposition}[theorem]{Proposition}
\newtheorem{corollary}[theorem]{Corollary}
\newtheorem{lemma}[theorem]{Lemma}
\newtheorem*{claim*}{Claim}
\newtheoremstyle{case}{}{}{}{}{}{:}{ }{}
\theoremstyle{case}
\theoremstyle{definition}
\newtheorem{definition}[theorem]{Definition}
\newtheorem{example}[theorem]{Example}
\newtheorem{remark}[theorem]{Remark}
\newcommand \R{{\mathbb R}}
\newcommand \PP{{\mathbb P}}
\newcommand \K{{\mathcal K}}
\def \inf{\mathrm{inf}}
\DeclareMathOperator{\diag}{Diag}
\DeclareMathOperator{\inter}{int}
\def \L{{\mathcal{L}}}
\title{$\K$-Lorentzian Polynomials, Semipositive Cones, and Cone-Stable EVI Systems}
\author{Papri Dey}
\address{Applied Mathematics, Baskin School of Engineering, University of California, Santa Cruz}
\date{}
\subjclass[2020]{49J40, 34D20, 90C33, 90C22, 15A48, 52A41}
\keywords{$\K$-Lorentzian polynomials,completely log-concave polynomials, hyperbolic polynomials, Rayleigh differences, Rayleigh matrix, semipositive cones, cone-preserving linear maps, evolution variational inequalities, cone-stability, Lyapunov functions
}
\begin{document}
\begin{abstract}
Lorentzian and completely log-concave polynomials have recently emerged as a unifying framework for negative dependence, log-concavity, and convexity phenomena in combinatorics and probability.
We extend this framework to variational analysis and cone-constrained dynamics by studying \emph{$\K$-Lorentzian} and \emph{$\K$-completely log-concave} polynomials over a proper convex cone $\K\subset\R^n$.
For a $\K$-Lorentzian form $f$ and $v\in\operatorname{int}\K$ we associate an open cone $\K^\circ(f,v)$ and a closed cone $\K(f,v)$ via directional derivatives along $v$, recovering the usual hyperbolicity cone when $f$ is hyperbolic.
We prove that $\K^\circ(f,v)$ is a proper cone, equal to the interior of  $\K(f,v)$; if $f$ is $\K(f,v)$-Lorentzian, then $\K(f,v)$ is convex and maximal among convex cones on which $f$ is Lorentzian.
Using the Rayleigh matrix $M_f(x)=\nabla f(x)\nabla f(x)^{\mathsf T}-f(x)\nabla^2 f(x)$, we obtain cone-restricted Rayleigh inequalities and show that two-direction Rayleigh inequalities on $\K$ are equivalent to an acuteness condition on $\K$ for the bilinear form $v^{\mathsf T}M_f(x)w$.
This yields a cone-restricted negative–dependence interpretation linking the curvature of $\log f$ to covariance structures of associated Gibbs measures.
For a nonsingular matrix $A$ we study the determinantal generating polynomial $f_A(x)=\det(\sum_j x_j D_j)$ and show that it is hyperbolic, with hyperbolicity cone intersecting the nonnegative orthant exactly in the classical semipositive cone $K_A={x\ge 0:Ax\ge 0}$.
We generalize this to arbitrary proper cones by introducing $\K$-semipositive cones $K_A=\Lambda_+(f_A,e)\cap\K$ and proving that $f_A$ is $K_A$-Lorentzian.
Finally, for linear evolution variational inequality (LEVI) systems we show that if $q(x)=x^{\mathsf T}Ax$ is (strictly) $\K$-Lorentzian, then $A$ is (strictly) $\K$-copositive and Lyapunov semi-/positive stable on $\K$.
As a consequence, the trivial solution is (asymptotically) cone-stable, giving new Lyapunov criteria for cone-constrained dynamics expressed in terms of $\K$-Lorentzian quadratic forms and their associated cones.

\end{abstract}
\maketitle
\section{Introduction}

Variational inequalities and evolution inclusions on closed convex sets are a
central theme of variational analysis, with applications ranging from contact
and friction in mechanics to constrained control and conic optimization. A
natural and flexible class of such models is given by \emph{evolution
variational inequality} (EVI) systems of the form
\[
  \dot x(t) + Ax(t) + F(x(t)) \in -N_{\K}(x(t)),
  \qquad x(t)\in\K,\ t\ge t_0,
\]
where $A\in\R^{d\times d}$, $F:\R^d\to\R^d$ is nonlinear, $N_{\K}(x)$ is the
normal cone to a closed convex set $\K\subset\R^d$, and $t\mapsto x(t)$ is the
state trajectory. Under standard structural assumptions on $F$, such systems
admit unique global solutions, and a detailed theory of stability with respect
to the constraint set $\K$-\emph{cone-stability} when $\K$ is a cone has
been developed in \cite{Goelevenstability,stabilitylevi,Henrionconecopositive}
and the references therein.

In many applications the relevant constraint set is a proper convex cone
$\K\subset\R^d$, encoding positivity, unilateral constraints, or a conic
feasibility region. Even when the origin is unstable as an equilibrium of the
unconstrained system
\(
  \dot x(t) = -Ax(t) - F(x(t)),
\)
it may become stable once trajectories are constrained to remain in $\K$. This
phenomenon motivates a refined qualitative analysis of cone-constrained
dynamics, Lyapunov functions adapted to $\K$, and their interaction with the
geometry of cone-preserving linear maps and conic optimization.

Parallel to these developments, there has been intensive work on \emph{hyperbolic},
\emph{Lorentzian}, and \emph{completely log-concave} polynomials, originating
in convex algebraic geometry, optimization, and probability. Hyperbolic
polynomials give rise to hyperbolicity cones and hyperbolic barriers in conic
optimization, while Lorentzian and completely log-concave polynomials encode
strong log-concavity and negative dependence properties of discrete
distributions. A key feature of these polynomial classes is that they carry
rich second-order information through Rayleigh-type inequalities and curvature
bounds for their logarithms.

The aim of this paper is to use the $\K$-Lorentzian framework to uncover
new geometric and dynamical structures in variational analysis. We associate to
a $\K$-Lorentzian polynomial a canonical cone $\K(f,v)$ defined by directional
derivatives along $v\in\inter\K$, analyze when this cone is convex and maximal,
and relate it to classical hyperbolicity cones and semipositive cones arising in
conic optimization. This yields a semialgebraic, polynomially defined enlargement
of a given modeling cone $\K$ that is still compatible with Lorentzian/hyperbolic
geometry.

First, we develop $\K$-Lorentzian Lyapunov
functions and Rayleigh-type inequalities that give cone-restricted curvature and
negative-dependence information, and use these to derive Lyapunov criteria for
cone-stability of evolution variational inequality (EVI) systems. Second, we
show how determinantal generating polynomials of semipositive matrices induce
$\K$-semipositive cones that serve simultaneously as invariant cones for
cone-preserving linear maps and as hyperbolic barrier domains for conic
optimization. This connects Lorentzian/hyperbolic polynomial geometry with
cone-stable dynamics, invariant cones, and hyperbolic barrier methods in a
single unified framework.

\subsection*{Main contributions}

We briefly summarize the main results.

\smallskip\noindent
\textbf{1. Cones associated with $\K$-Lorentzian polynomials.}
Let $\K\subset\R^n$ be a proper convex cone and $f\in\R_n^d[x]$ a
$\K$-Lorentzian form. Fix $v\in\operatorname{int}\K$. Motivated by the
hyperbolicity cone of a hyperbolic polynomial, we associate to $(f,v)$ an open
semialgebraic cone
\[
  \K^\circ(f,v)
  = \{x : f(x)>0,\ D_v f(x)>0,\dots,D_v^{d-1}f(x)>0\},
\]
and its closure
\[
  \K(f,v)
  = \{x : f(x)\ge0,\ D_v f(x)\ge0,\dots,D_v^{d-1}f(x)\ge0\}.
\]
We show that $\K^\circ(f,v)$ is a proper cone (contains no line), has
nonempty interior, and that (\Cref{them:cone})
\[
  \operatorname{int}\K(f,v)=\K^\circ(f,v).
\]
When $f$ is hyperbolic, this construction recovers the usual hyperbolicity
cone. We also prove that the ambient cone $\K$ is always contained in
$\K(f,v)$ \Cref{lemma:inclusion}), and that if $f$ is $\K(f,v)$-Lorentzian (equivalently,
$\K(f,v)$-CLC), then $\K(f,v)$ is convex (\Cref{them:convexcone}). In this case $\K(f,v)$ is the
largest convex cone containing $v$ in its interior on which $f$ is Lorentzian.
We discuss examples where $\K(f,v)$ fails to be convex even though $f$ is
Lorentzian, and formulate an open problem on characterizing convex cones of
the form $\K(f,v)$. 

\smallskip\noindent
\textbf{2. Rayleigh differences and cone-restricted negative curvature.}
For a homogeneous polynomial $f$ we introduce the Rayleigh matrix
\[
  M_f(x)
  = \nabla f(x)\,\nabla f(x)^{\mathsf T} - f(x)\,\nabla^2 f(x).
\]
If $f$ is $\K$-Lorentzian, then $-\nabla^2\log f(x)\succeq 0$ on
$\operatorname{int}\K$, so $M_f(x)\succeq 0$ there. This yields diagonal
Rayleigh inequalities
\[
  R_u f(x)
  := (D_u f(x))^2 - f(x)D_u^2 f(x)
  = u^{\mathsf T}M_f(x)u \ge 0,
\]
for all $x\in\K$ and all $u\in\R^n$ (\Cref{them:rayleighdifference}). We then study the two-direction Rayleigh
differences
\[
  R_{v,w} f(x)
  := D_v f(x)\,D_w f(x) - f(x)D_vD_w f(x)
  = v^{\mathsf T}M_f(x)w,
\]
and show that nonnegativity $R_{v,w} f(x)\ge0$ for all $v,w\in\K$ is equivalent
to an \emph{acuteness} condition: the cone $\K$ must be acute with respect to
the bilinear form $\langle v,w\rangle_{M_f(x)}=v^{\mathsf T}M_f(x)w$ (\Cref{prop:two-dir-rayleigh})). In the
polyhedral case this reduces to checking Rayleigh inequalities on generating
rays. This provides a cone-restricted analogue of strong Rayleigh negative
dependence and clarifies the role of $M_f(x)$ as a matrix-valued refinement of
scalar Rayleigh differences, with implications for Gibbs measures and
log-concave sampling (see discussion after \Cref{def:rayleigh_measure}). 

\smallskip\noindent
\textbf{3. Semipositive cones and hyperbolic generating polynomials.}
We then turn to cones arising from cone-preserving linear maps and conic
optimization. For a nonsingular matrix $A\in\R^{n\times n}$ we consider the
determinantal generating polynomial
\[
  f_A(x) = \det\Bigl(\sum_{j=1}^n x_j D_j\Bigr),
\]
where $D_j$ is the diagonal matrix whose diagonal entries are the $j$th column
of $A$. We show that $f_A$ is hyperbolic (\Cref{prop:hyp}) and that its hyperbolicity cone (\Cref{prop:simhypcone})
intersects the nonnegative orthant in the \emph{semipositive cone}
\[
  K_A = \{x\in\R^n_{\ge0}: Ax\ge0\}.
\]
We then generalize semipositive matrices and semipositive cones to an
arbitrary proper convex cone $\tilde\K$, defining $\tilde\K$-semipositive
matrices and associated cones
\(
  K_A = \Lambda_+(f_A,e)\cap\tilde\K.
\)
We prove that $f_A$ is $K_A$-Lorentzian (\Cref{them:clcsemipositive})), thereby exhibiting $f_A$ as a
natural hyperbolic/$\K$-Lorentzian barrier on $K_A$ and linking hyperbolic
geometry with the theory of cone-preserving linear maps.

\smallskip\noindent
\textbf{4. Cone-stability of LEVI systems via $\K$-Lorentzian quadratics.}
In the final part of the paper we apply the above constructions to EVI and, in
particular, to linear EVI (LEVI) systems. We show how these notions yield Lyapunov criteria for
cone-stability of linear evolution variational inequality (LEVI) systems,
providing a bridge between $\K$-Lorentzian quadratic forms and classical
stability theory on cones. We recall abstract Lyapunov criteria for stability and asymptotic stability with respect to a cone $\K$,
expressed in terms of $\K$-copositive matrices and Lyapunov semi/positive
stability.

We then show that if the quadratic form $f(x)=x^{\mathsf T}Ax$ is
(respectively, strictly) $\K$-Lorentzian, then $A$ is (respectively, strictly)
$\K$-copositive and hence Lyapunov semi/positive stable on $\K$. As a
consequence, the trivial solution of the LEVI system is (asymptotically)
stable with respect to $\K$ (\Cref{prop:clcconstable}). More generally, if symmetric part of $A$ is $\K$-copositive and has exactly one positive eigenvalue, by \Cref{prop:levi-hyperbolic-cone-stability} the trivial solution of the LEVI system is (asymptotically) stable w.r.t $\K$. Furthermore, if a quadratic $f(x)$ is $\K$-Lorentzian, the trivial solution of the class of LEVI systems is (asymptotically) stable w.r.t $\K$ by \Cref{them:levi-hyperbolic-cone-stability}. We illustrate how unstable linear systems in the full space can become asymptotically stable once constrained to cones of the form $\K(f,v)\cap\R^n_{\ge0}$ constructed from $\K$-Lorentzian quadratic forms (\Cref{ex:levi}).


\noindent In \Cref{sec:cone} we construct the cones $\K^\circ(f,v)$ and $\K(f,v)$
associated with a $\K$-Lorentzian polynomial, establish their basic
properties, and relate them to hyperbolicity cones. In \Cref{sec:rayleigh} we
develop cone-restricted Rayleigh inequalities, introduce the Rayleigh matrix
$M_f(x)$, and connect acuteness of $\K$ to two-direction Rayleigh differences
and negative dependence. In \Cref{sec:semipositive} we also introduce
semipositive and $\K$-semipositive cones and show that their generating
polynomials are $\K$-Lorentzian, with applications to conic optimization and
cone-preserving linear maps. Finally, in \Cref{sec:dynamic} we revisit
stability theory for EVI and LEVI systems and derive new Lyapunov criteria in terms of $\K$-Lorentzian quadratic forms, illustrating cone-stability on
explicit low-dimensional examples.
\section{Cone associated with \texorpdfstring{$\K$}{}-Lorentzian polynomials} 
\label{sec:cone}
We now explain how, given a $K$-Lorentzian polynomial, one can canonically
associate to it a proper cone determined by a distinguished interior point.
Let $\R[x]$ represent the space of $n$-variate polynomials over $\R$, and $\R[x]_{\leq d}$ represent the space of $n$-variate polynomials over $\R$ with degree at most $d$, and $\R[x]^d_n$ denote the set of real homogeneous polynomials (aka forms) in $n$ variables of degree $d$. Throughout this subsection, let $\K \subseteq \R^n$ be a proper convex cone and
let $f \in \R[x]_n^d$ be a $\K$-Lorentzian form. 

For a point $a\in \R^n$ and $f\in \R[x]$, $D_a f$ denotes the directional derivative of $f$ in direction $a$: $D_a f=\sum_{i=1}^n a_i\frac{\partial f}{\partial x_i}$. Here are the definitions of the $\K$-completely log-concave polynomials and $\K$-Lorentzian forms.
\begin{definition} \cite{GPlorentzian} \label{def:clc}
A polynomial (form) $f \in \R[x]_{\leq d}$ is called a $\K$-completely log-concave aka $\K$-CLC (form) on a proper convex cone $\K$ if for any choice of $a_{1}, \dots, a_{m} \in \K$, with $m\leq d$, we have that $D_{a_{1}} \dots D_{a_m} f$ is log-concave on $\inter \K$.  
A polynomial (form) $f \in \R[x]$ is strictly $\K$-CLC if for any choice of $a_{1}, \dots, a_{m} \in \K$, with $m\leq d$,  $D_{a_{1}} \dots D_{a_m} f$ is strictly log-concave on all points of $\K$. 
\end{definition}
\begin{definition} \cite{GPlorentzian} \label{def:pls}
Let $\K$ be a proper convex cone. A form $f \in \R[x]_n^d$ of degree $d\geq 2$ is said to be $\K$-Lorentzian if for any $a_{1},\dots,a_{d-2} \in  \inter \K$,  the quadratic form $q=D_{a_{1}} \dots D_{a_{d-2}}f$ satisfies the following conditions:  
\begin{enumerate}
 \item  The matrix $Q$ of $q$ has exactly one positive eigenvalue.
\item  For any $x,y \in\inter \K$ we have $y^t Q x=\langle y,Qx\rangle>0$.
\end{enumerate}
For degree $d \leq 1$ a form is $\K$-Lorentzian if it is nonnegative on $\K$.  
\end{definition} 
Recall that, by \cite[Theorem~4.10]{GPlorentzian}, the classes
of $K$-Lorentzian and $K$-completely log-concave (K-CLC) forms coincide.

In analogy with the hyperbolicity cone of a hyperbolic polynomial with respect
to a direction $e$, we associate to the pair $(f,v)$ a proper cone
$\K(f,v) \subseteq \R^n$ that contains $v$ in its interior. We begin by giving a
polynomial inequality description of its interior, which will be a semialgebraic
set. Since $f$ is $\K$-CLC and $v \in \operatorname{int} \K$, all directional
derivatives $D_v^k f$ are log-concave on $\operatorname{int} \K$. 
\begin{proposition} \label{prop:noncoeffs}
   Let $f(x)$ be a nonzero $\K$-CLC over a proper convex cone $\K$. Then for any $x,v \in \inter \K$,  the coefficients of $f(x+tv)$ are positive. 
   \end{proposition}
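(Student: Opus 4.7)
The plan is to begin with the Taylor expansion
\[
  f(x+tv) \;=\; \sum_{k=0}^{d} \frac{t^k}{k!}\, D_v^k f(x),
\]
so that the coefficient of $t^k$ equals $\tfrac{1}{k!}D_v^k f(x)$. It then suffices to show $D_v^k f(x) > 0$ for all $0 \le k \le d$ and $x,v \in \inter\K$.

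The linchpin will be an auxiliary positivity claim: any polynomial $g$ which is nonzero and log-concave on $\inter\K$ satisfies $g>0$ throughout $\inter\K$. I would prove this by contradiction. Log-concavity on the open convex set $\inter\K$ gives $g\ge 0$ there, and $g\not\equiv 0$ as a polynomial yields some $y\in\inter\K$ with $g(y)>0$. Suppose $g(x_0)=0$ for some $x_0\in\inter\K$. For $\epsilon>0$ sufficiently small, the point $w_\epsilon := (1+\epsilon)x_0-\epsilon y$ still lies in $\inter\K$, and $x_0 = \tfrac{1}{1+\epsilon}w_\epsilon + \tfrac{\epsilon}{1+\epsilon}y$ is a convex combination. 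Log-concavity then gives
\[
  0 \,=\, g(x_0) \;\ge\; g(w_\epsilon)^{1/(1+\epsilon)}\, g(y)^{\epsilon/(1+\epsilon)},
\]
which, together with $g(y)>0$, forces $g(w_\epsilon)=0$ for every small $\epsilon>0$. Since $\epsilon\mapsto g((1+\epsilon)x_0-\epsilon y)$ is a polynomial in $\epsilon$ vanishing on an interval, it vanishes identically; evaluating at $\epsilon=-1$ yields $g(y)=0$, a contradiction.

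Next I would induct on $k$ to prove that $D_v^k f$ is nonzero as a polynomial. The base case $k=0$ is the hypothesis. For the inductive step, $D_v^{k-1}f$ is nonzero (by the inductive hypothesis) and log-concave on $\inter\K$ (by the $\K$-CLC property of $f$ applied to $k-1$ copies of $v\in\K$), so the auxiliary claim gives $D_v^{k-1}f>0$ on $\inter\K$; in particular $D_v^{k-1}f(v)>0$. If $D_v^k f\equiv 0$, then the form $h:=D_v^{k-1}f$ of degree $d-k+1\ge 1$ would be invariant under $v$-translations, i.e.\ $h(x+tv)=h(x)$; setting $x=0$ and using homogeneity gives $t^{d-k+1}h(v) = h(0)=0$ for every $t$, hence $h(v)=0$, contradicting $h(v)>0$.

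Combining these two steps, each $D_v^k f$ is nonzero and log-concave on $\inter\K$, so the auxiliary claim gives $D_v^k f(x)>0$ on $\inter\K$ for $0\le k\le d-1$. For the top coefficient, homogeneity of $f$ yields the identity $D_v^d f = d!\,f(v)$ (a constant), and this is positive because the auxiliary claim applied to $f$ itself gives $f(v)>0$. Every coefficient of $f(x+tv)$ is therefore strictly positive. I expect the main obstacle to be the auxiliary positivity claim: log-concavity alone does not a priori rule out zeros in the interior of the domain, and the polynomial-extension trick at $\epsilon=-1$ is what closes this gap; once it is in place, the remainder of the argument is a clean induction and bookkeeping.
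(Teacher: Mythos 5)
Your proof is correct and follows essentially the same route as the paper: expand $f(x+tv)$ as a Taylor series in $t$ and show each coefficient $\tfrac{1}{k!}D_v^k f(x)$ is positive because the iterated directional derivatives $D_v^k f$ are log-concave on $\inter \K$ by the $\K$-CLC property. The paper simply asserts this positivity, whereas you additionally justify the two implicit steps --- that a nonzero log-concave polynomial cannot vanish anywhere on $\inter \K$ (via the zero-propagation and polynomial-identity argument) and that no $D_v^k f$ vanishes identically --- so your write-up is a more complete version of the same argument.
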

   \begin{proof} Since $f$ is CLC over $\K$, its directional derivatives are log-concave, i.e., they satisfy the positivity condition at each degree level $d \geq 1$. Therefore, in particular, $D_{v}f(x+tv)=f^{'}(x+tv) >0,$ and $D_{v}D_{v}f(x+tv)=f^{''}(x+tv) >0$ for all $v \in \inter \K$. 
   That enforces all the coefficients of $f(x+tv)=f(x)+t D_v f(x) +\frac{t^{2}}{2} D_{v}^{2}f(x) +\dots+\frac{t^{d}}{d!}D_{v}^{d}f(x)$ to be positive. 
 \end{proof}  
In particular, 
by \Cref{prop:noncoeffs}, for every $x \in \operatorname{int} \K$ the
coefficients of the univariate restriction
\[
  t \longmapsto f(x + t v)
\]
are positive. This motivates the definition
\[
  \K^\circ(f,v)
  \coloneqq
  \{ x \in \R^n : f(x) > 0,\ D_v f(x) > 0,\ \dots,\ D_v^{d-1} f(x) > 0 \}.
\]
By construction, $\K^\circ(f,v)$ is an open semialgebraic cone, and from
Proposition~\ref{prop:noncoeffs} we also have $v \in \K^\circ(f,v)$.

\begin{proposition}\label{prop:Kcirc_proper}
Let $f \in \R[x]_n^d$ be a homogeneous polynomial of degree $d\ge 1$ and
$v\in\R^n$. Define
\[
\K^\circ(f,v)
:=\bigl\{x\in\R^n : f(x)>0,\;D_v f(x)>0,\;\dots,\;D_v^{d-1}f(x)>0\bigr\}.
\]
Then $\K^\circ(f,v)$ is an open cone, and for every $x\in \K^\circ(f,v)$ we have
$-x\notin \K^\circ(f,v)$. In particular, $\K^\circ(f,v)$ contains no line and hence
is a proper cone (in the sense that its lineality space is trivial).
\end{proposition}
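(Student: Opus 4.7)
The plan is to verify the three separate claims — openness, the cone property, and the antipodal exclusion $-x\notin\K^\circ(f,v)$ — and then deduce triviality of the lineality space as a direct consequence of the last one.

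First, openness is immediate from the definition: $\K^\circ(f,v)$ is the intersection of finitely many strict polynomial inequalities $f>0,\ D_v f>0,\ldots,D_v^{d-1}f>0$, and each of the polynomials involved is continuous. For the cone property, I would use homogeneity: since $f\in\R[x]_n^d$ is homogeneous of degree $d$, each directional derivative $D_v^k f$ is homogeneous of degree $d-k$, so for $\lambda>0$,
\[
D_v^k f(\lambda x) \;=\; \lambda^{d-k}\, D_v^k f(x),
\]
and positivity is preserved on multiplication by the positive scalar $\lambda^{d-k}$.

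The heart of the argument is the antipodal exclusion. Applying the same homogeneity identity with $\lambda=-1$ gives
\[
D_v^k f(-x) \;=\; (-1)^{d-k}\, D_v^k f(x), \qquad k=0,1,\ldots,d-1.
\]
If both $x$ and $-x$ lay in $\K^\circ(f,v)$, then $D_v^k f(x)>0$ and $(-1)^{d-k} D_v^k f(x)>0$ for every $k\in\{0,1,\ldots,d-1\}$, which forces $(-1)^{d-k}=+1$ for all such $k$. But the two consecutive exponents $d$ (from $k=0$) and $d-1$ (from $k=1$) have opposite parities, so this is impossible as soon as $d\ge 1$. (The edge case $d=1$ is handled by the single inequality $f(x)>0$ versus $f(-x)=-f(x)<0$.)

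Finally, properness in the stated sense is a corollary: a line through the origin contained in $\K^\circ(f,v)$ would include both some $w\ne 0$ and $-w$, contradicting the antipodal exclusion. I do not anticipate a serious obstacle here; the only thing to watch is making sure the parity argument is stated uniformly, since the argument uses $k=0$ and $k=1$ and therefore silently requires $d\ge 1$, which is already part of the hypothesis.
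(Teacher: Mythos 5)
Your proposal is correct and follows essentially the same route as the paper: homogeneity of $D_v^k f$ (degree $d-k$) gives openness and the cone property, the identity $D_v^k f(-x)=(-1)^{d-k}D_v^k f(x)$ gives the antipodal exclusion, and properness follows. The only cosmetic difference is that the paper splits on the parity of $d$ (using $k=1$ for $d$ even and $k=0$ for $d$ odd), whereas you combine $k=0$ and $k=1$ and treat $d=1$ as an edge case, which amounts to the same argument.
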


\begin{proof}
Since $f$ is homogeneous of degree $d$, we have $f(\lambda x)=\lambda^d f(x)$ for all
$\lambda>0$. Similarly, for each $k\in\{1,\dots,d-1\}$,
\[
D_v^k f(\lambda x) = \lambda^{d-k} D_v^k f(x),
\]
because $D_v^k f$ is homogeneous of degree $d-k$. If $x\in \K^\circ(f,v)$ and
$\lambda>0$, then $f(\lambda x)$ and all $D_v^k f(\lambda x)$ remain strictly
positive. Hence $\lambda x\in \K^\circ(f,v)$, so $\K^\circ(f,v)$ is a cone (closed
under multiplication by positive scalars). Openness follows directly from the
strict inequalities defining $\K^\circ(f,v)$.

Next, we show that $\K^\circ(f,v)$ does not contain any pair $\{\pm x\}$ with
$x\neq 0$. Using homogeneity and the fact that each $k$th order derivative
$D_v^k f$ is homogeneous of degree $d-k$, one checks that for each
$k\in\{0,1,\dots,d-1\}$,
\[
D_v^k f(-x) = (-1)^{d-k} D_v^k f(x).
\]
Indeed, this holds for $k=0$ by $f(-x)=(-1)^d f(x)$, and each differentiation
with respect to $v$ reduces the degree by one, introducing a factor $-1$.

Now fix $x\in \K^\circ(f,v)$. Then $D_v^k f(x)>0$ for all $k=0,\dots,d-1$. If
$d$ is even, then for $k=1$ we have
\[
D_v f(-x) = (-1)^{d-1} D_v f(x) = - D_v f(x) <0,
\]
so $-x\notin \K^\circ(f,v)$. If $d$ is odd, then for $k=0$ we have
\[
f(-x) = (-1)^d f(x) = - f(x) <0,
\]
so again $-x\notin \K^\circ(f,v)$. Thus in all cases $x\in \K^\circ(f,v)$ implies
$-x\notin \K^\circ(f,v)$.

Since $\K^\circ(f,v)$ is a cone (closed under $\lambda>0$) and contains no pair
$\{\pm x\}$ with $x\neq 0$, it cannot contain any line $\{x_0 + t w : t\in\R\}$:
if it did, then by conic property that line would pass through the origin and contain
some $w\neq 0$ with both $w$ and $-w$ in $\K^\circ(f,v)$, contradicting the
previous paragraph. Hence $\K^\circ(f,v)$ contains no line, so its lineality
space is trivial. In particular,
it is a cone with nonempty interior (since $v\in \K^\circ(f,v)$) and contains no line. This is precisely the statement that $\K^\circ(f,v)$ is a
proper cone.
\end{proof}

\begin{proposition}\label{prop:connected}
Let $f \in \R[x]_n^d$ be a homogeneous polynomial of degree $d\ge 1$ and fix
$v \in \R^n$. Let $\K^\circ(f,v)$ be as above. Then for every $x \in \K^\circ(f,v)$
there exists a continuous path from $x$ to $v$ along which $f$ remains strictly
positive. In particular, $\K^\circ(f,v)$ is contained in the connected component
of $\{x \in \R^n : f(x) \neq 0\}$ that contains $v$.
\end{proposition}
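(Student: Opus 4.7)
The plan is to connect $x$ to $v$ by the straight line segment $\gamma:[0,1]\to\R^n$, $\gamma(s)=(1-s)x+sv$, and to prove directly that $f(\gamma(s))>0$ for every $s\in[0,1]$. This would give both conclusions at once: the continuous path $\gamma$ witnesses the first claim, and because its image lies in $\{y:f(y)>0\}\subseteq\{y:f(y)\neq 0\}$, the endpoints $x$ and $v$ must belong to the same connected component of $\{f\neq 0\}$.

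The key step is a homogeneity reparametrization. For $s\in[0,1)$, factor $\gamma(s)=(1-s)\bigl(x+tv\bigr)$ with $t:=s/(1-s)\in[0,\infty)$. Since $f$ is homogeneous of degree $d$,
\[
  f(\gamma(s))=(1-s)^d\,f(x+tv),
\]
and $(1-s)^d>0$, so the sign of $f(\gamma(s))$ is controlled by the univariate polynomial $f(x+tv)$ on $[0,\infty)$. Its Taylor expansion is
\[
  f(x+tv)=\sum_{k=0}^{d}\frac{t^k}{k!}\,D_v^k f(x),
\]
and the coefficients $D_v^k f(x)$ for $k=0,1,\dots,d-1$ are strictly positive by the very definition of $x\in\K^\circ(f,v)$. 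The top coefficient satisfies $D_v^d f(x)/d!=f(v)$ (a one-line consequence of homogeneity, since $D_v^d f$ is the constant polynomial equal to $d!\,f(v)$), and this is positive under the standing hypotheses by \Cref{prop:noncoeffs}, which gives $f(v)>0$ when $v\in\inter\K$ and $f$ is $\K$-CLC. Hence every coefficient of $f(x+tv)$ is strictly positive, so $f(x+tv)>0$ on $[0,\infty)$, and consequently $f(\gamma(s))>0$ on $[0,1)$.

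To close the path at $s=1$, note that $\gamma(1)=v$ and $f(v)>0$, so $f\circ\gamma$ is strictly positive on the entire interval $[0,1]$. The ``in particular'' conclusion is then immediate: $\gamma$ lies in $\{f>0\}$, so $x$ and $v$ are in the same path-component, and hence the same connected component, of $\{f\neq 0\}$. The only delicate point is securing the positivity of the top Taylor coefficient $f(v)$, because it is not part of the defining strict inequalities of $\K^\circ(f,v)$; this is precisely where the standing assumption $v\in\inter\K$ enters via \Cref{prop:noncoeffs}. Once that observation is made, I expect no further obstacle: the argument reduces to the elementary fact that a univariate polynomial with strictly positive coefficients is strictly positive on $[0,\infty)$.
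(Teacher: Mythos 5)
Your proof is correct, and it takes a genuinely different --- and simpler --- route than the paper's. The paper does not use the straight segment from $x$ to $v$: it concatenates three pieces, namely the ray $\{x+tv : 0\le t\le\lambda\}$, a translate $l+\lambda v$ of the segment $l=[x,v]$, and the ray $\{v+tv : 0\le t\le \lambda\}$, where the uniform shift $\lambda$ is produced by a compactness/continuity argument using that the leading coefficient $\tfrac{1}{d!}D_v^d f \equiv f(v)$ is a positive constant; that uniform-$\lambda$ step is the only delicate point of the paper's argument. Your reparametrization $(1-s)x+sv=(1-s)\bigl(x+\tfrac{s}{1-s}v\bigr)$ together with homogeneity shows that the straight segment $[x,v]$ itself lies in $\{f>0\}$, reducing everything to the elementary fact that $f(x+tv)$ has positive coefficients, so you avoid the detour and the compactness argument entirely, and you even get a slightly stronger conclusion (positivity of $f$ on the chord $[x,v]$, not just on some path). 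Both arguments need $f(v)>0$: the proposition's literal hypothesis ``$v\in\R^n$'' does not supply it, and the paper uses it tacitly when asserting $D_v^d f(v)>0$ from $v\in\K^\circ(f,v)$ (note $D_v^d f\equiv d!\,f(v)$, so this is really the $k=0$ condition at $v$), whereas you flag the issue explicitly and justify it from the standing context ($f$ being $\K$-CLC and $v\in\inter\K$, via \Cref{prop:noncoeffs}), which is the more careful treatment.
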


\begin{proof}
Fix $x \in \K^\circ(f,v)$ and let $l$ be the line segment with endpoints $x$ and $v$,
that is
\[
  l \coloneqq \{ y = (1-s)x + s v : s \in [0,1] \}.
\]

\medskip\noindent
\emph{Step 1: positivity along the rays from $x$ and $v$.}
For $w \in \R^n$ consider the univariate polynomial
\[
  g_w(t) \coloneqq f(w + t v), \qquad t \in \R.
\]
By repeated differentiation and homogeneity of $f$, we have the Taylor expansion
\[
  g_w(t)
  = f(w+tv)
  = \sum_{k=0}^d \frac{1}{k!} D_v^k f(w) \, t^k.
\]
If $w \in \K^\circ(f,v)$, then by definition $D_v^k f(w) > 0$ for all $k=0,\dots,d-1$,
and we also have $D_v^d f(w) > 0$ (since $f$ is homogeneous, $D_v^d f$ is a constant
independent of $w$). Hence all coefficients of $g_w$ are positive, and therefore
$g_w(t) > 0$ for all $t \ge 0$.

Applying this with $w=x$ and $w=v$, we conclude that
\[
  f(x+tv) > 0 \quad\text{and}\quad f(v+tv) > 0 \qquad \text{for all } t \ge 0.
\]
In particular, the rays $\{x+tv : t \ge 0\}$ and $\{v+tv : t \ge 0\}$ are contained in
the positive set $\{f>0\}$.

\medskip\noindent
\emph{Step 2: positivity on a translated segment.}
For each $y \in l$, consider again $h_y(t) \coloneqq f(y+tv)$. As above,
\[
  h_y(t)
  = \sum_{k=0}^d \frac{1}{k!} D_v^k f(y) \, t^k,
\]
and the leading coefficient is
\[
  \frac{1}{d!} D_v^d f(y).
\]
Since $f$ is homogeneous of degree $d$ and $D_v^d$ lowers the degree by $d$, the
polynomial $D_v^d f$ is homogeneous of degree $0$, hence constant in $y$. From
$v \in \K^\circ(f,v)$ we know $D_v^d f(v) > 0$, so
\[
  D_v^d f(y) = D_v^d f(v) > 0 \qquad \text{for all } y \in \R^n.
\]
In particular, for each fixed $y\in l$ the univariate polynomial $h_y(t)$ has positive
leading coefficient, so $h_y(t) \to +\infty$ as $t \to +\infty$. Thus, for every
$y \in l$ there exists some $\lambda_y > 0$ such that $h_y(t) > 0$ for all
$t \ge \lambda_y$.

The map $(y,t) \mapsto h_y(t)$ is continuous on $l \times [0,\infty)$, and $l$ is
compact. Therefore we can choose a uniform $\lambda > 0$ such that
\[
  f(y + \lambda v) = h_y(\lambda) > 0 \qquad \text{for all } y \in l.
\]
Hence the translated segment $l + \lambda v = \{ y + \lambda v : y \in l \}$ is
contained in the positive set $\{f>0\}$.

\medskip\noindent
\emph{Step 3: concatenating paths.}
Combining the two steps above, we obtain a continuous path from $x$ to $v$ along
which $f$ remains strictly positive:
\[
  \{x + t v : 0 \le t \le \lambda\}
  \;\cup\;
  \{y + \lambda v : y \in l\}
  \;\cup\;
  \{v + t v : \lambda \ge t \ge 0\}.
\]
This path lies entirely in $\{f>0\} \subseteq \{f \neq 0\}$. Therefore $x$ and $v$
belong to the same connected component of $\{x : f(x) \neq 0\}$, and since
$x \in \K^\circ(f,v)$ was arbitrary, the set $\K^\circ(f,v)$ is contained in the
connected component of $\{f \neq 0\}$ that contains $v$.
\end{proof}
\begin{remark}
One might hope that $\K^\circ(f,v)$ coincides with the entire connected component
of $\{x : f(x)\neq 0\}$ containing $v$. This is false in general. Our results only
require the containment $\K^\circ(f,v)$ in the positive component of $\{f\neq 0\}$,
not equality.
\end{remark}

\noindent We now define the closed cone associated with $(f,v)$ by
\begin{equation}\label{eq:cone}
  \K(f,v) \coloneqq \bigl\{x \in \R^{n} : f(x) \geq 0,\ D_{v}f(x) \geq 0,\ \dots,\ D_{v}^{d-1}f(x) \geq 0 \bigr\}.
\end{equation}
Thus $\K(f,v)$ is a closed semialgebraic cone, being a finite intersection of sets
of the form $\{x : p(x) \ge 0\}$ with $p$ polynomial. We will show that $\K(f,v)$ is precisely the (topological) closure of the open cone $\K^\circ(f,v)$ and that $\K^\circ(f,v)$ coincides with the interior of $\K(f,v)$.
\begin{lemma}\label{lem:taylor-sign}
Let $g:\R\to\R$ be a real-analytic function and let $t_0\in\R$.
Suppose there exists an integer $\ell\ge 1$ such that
\[
  g^{(j)}(t_0) = 0 \quad \text{for all } j=0,1,\dots,\ell-1,
  \qquad \text{and} \qquad
  g^{(\ell)}(t_0) \neq 0.
\]
Then there exists $\delta>0$ such that for all $|t-t_0|<\delta$,
\begin{equation}\label{eq:taylor-leading}
  g(t)
  = \frac{g^{(\ell)}(t_0)}{\ell!}\,(t-t_0)^\ell + R(t),
\end{equation}
where $R(t)$ satisfies
\[
  \lim_{t\to t_0} \frac{R(t)}{(t-t_0)^\ell} = 0.
\]
In particular:
\begin{enumerate}[(a)]
  \item If $\ell$ is odd, then for every neighborhood of $t_0$ the function $g$
  takes both positive and negative values; more precisely, for $|t-t_0|$ small
  enough,
  \[
    \operatorname{sign} g(t)
    = \operatorname{sign}\bigl(g^{(\ell)}(t_0)\bigr)\cdot \operatorname{sign}(t-t_0).
  \]
  \item If $\ell$ is even, then for $|t-t_0|$ small enough,
  \[
    \operatorname{sign} g(t)
    = \operatorname{sign}\bigl(g^{(\ell)}(t_0)\bigr).
  \]
  In particular, if $g^{(\ell)}(t_0) < 0$, then $g(t)<0$ for all
  $0<|t-t_0|<\delta$, so $t_0$ is not an interior point of the superlevel set
  $\{t : g(t)\ge 0\}$.
\end{enumerate}
\end{lemma}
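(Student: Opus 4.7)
The plan is to use the real-analyticity of $g$ at $t_0$ to factor the Taylor series at its first nonzero term, and then read off the sign behavior from the parity of $\ell$. Since $g$ is real-analytic at $t_0$, there exists $\rho>0$ such that for all $|t-t_0|<\rho$ the Taylor series converges:
\[
  g(t) = \sum_{k=0}^\infty \frac{g^{(k)}(t_0)}{k!}\,(t-t_0)^k.
\]
The hypothesis that $g^{(j)}(t_0)=0$ for $j<\ell$ lets me drop the first $\ell$ terms and pull out a common factor, writing
\[
  g(t) = (t-t_0)^\ell\, h(t),
  \qquad
  h(t) \coloneqq \frac{g^{(\ell)}(t_0)}{\ell!}
       + \sum_{k=1}^\infty \frac{g^{(\ell+k)}(t_0)}{(\ell+k)!}\,(t-t_0)^k.
\]
The function $h$ is real-analytic on $|t-t_0|<\rho$ with $h(t_0)=g^{(\ell)}(t_0)/\ell!\neq 0$.

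Next I would verify the remainder estimate \eqref{eq:taylor-leading}. Setting
$R(t) \coloneqq g(t) - \frac{g^{(\ell)}(t_0)}{\ell!}(t-t_0)^\ell$, the same factorization gives
$R(t) = (t-t_0)^{\ell+1}\,\tilde h(t)$ for some analytic $\tilde h$ bounded near $t_0$. Hence
$R(t)/(t-t_0)^\ell = (t-t_0)\tilde h(t)\to 0$ as $t\to t_0$, which is exactly the stated order of vanishing.

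For the sign conclusions (a) and (b), I use continuity of $h$: since $h(t_0)\neq 0$, there exists $\delta\in(0,\rho)$ such that $\operatorname{sign}(h(t))=\operatorname{sign}(g^{(\ell)}(t_0))$ for $|t-t_0|<\delta$. On this interval $\operatorname{sign}(g(t))=\operatorname{sign}((t-t_0)^\ell)\cdot\operatorname{sign}(g^{(\ell)}(t_0))$. If $\ell$ is even, $(t-t_0)^\ell>0$ for $t\neq t_0$ and the sign of $g(t)$ equals that of $g^{(\ell)}(t_0)$; in particular, if $g^{(\ell)}(t_0)<0$ then $g(t)<0$ on a punctured neighborhood of $t_0$, so $t_0$ cannot be interior to $\{g\ge 0\}$. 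If $\ell$ is odd, $\operatorname{sign}((t-t_0)^\ell)=\operatorname{sign}(t-t_0)$, giving the formula in (a) and showing that $g$ takes both signs arbitrarily close to $t_0$.

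This proof is essentially routine; there is no substantive obstacle, only bookkeeping. The only point requiring care is the justification that the tail series defining $h$ (and similarly $\tilde h$) is itself analytic with the claimed value at $t_0$, which follows from the fact that rearranging an absolutely convergent power series by factoring out $(t-t_0)^\ell$ is legitimate on any disk of convergence. Once this is in place, parts (a) and (b) follow immediately from the parity of the exponent $\ell$.
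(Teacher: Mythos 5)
Your proof is correct and follows essentially the same route as the paper: both isolate the leading Taylor term $\frac{g^{(\ell)}(t_0)}{\ell!}(t-t_0)^\ell$, show the rest is of strictly lower order near $t_0$, and read off the sign behavior from the parity of $\ell$. The only cosmetic difference is that you obtain the remainder estimate by factoring the convergent power series as $g(t)=(t-t_0)^\ell h(t)$ with $h$ analytic and nonvanishing at $t_0$, whereas the paper simply invokes Taylor's theorem with the Peano remainder $R(t)=o(|t-t_0|^\ell)$, an argument that would even work under the weaker hypothesis $g\in C^\ell$.
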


\begin{proof}
By Taylor's theorem at $t_0$ we have
\[
  g(t)
    = \sum_{j=0}^{\ell-1} \frac{g^{(j)}(t_0)}{j!}(t-t_0)^j
      + \frac{g^{(\ell)}(t_0)}{\ell!}(t-t_0)^\ell + R(t),
\]
where $R(t)$ satisfies $R(t) = o(|t-t_0|^\ell)$ as $t\to t_0$. By hypothesis,
$g^{(j)}(t_0)=0$ for $j=0,\dots,\ell-1$, so the sum over $j$ vanishes and
\eqref{eq:taylor-leading} holds. The limit property of $R(t)$ implies that,
for $|t-t_0|$ small enough, the sign of $g(t)$ is the same as the sign of
$\frac{g^{(\ell)}(t_0)}{\ell!}(t-t_0)^\ell$. If $\ell$ is odd, the factor
$(t-t_0)^\ell$ changes sign as $t$ passes through $t_0$, whereas if $\ell$
is even it does not. This yields the two cases.
\end{proof}

\begin{theorem}\label{them:cone}
Let $\K \subseteq \R^n$ be a proper convex cone and let $f$ be a $\K$-Lorentzian
polynomial. Fix $v \in \K^\circ(f,v)$ (in particular $v\in\operatorname{int} \K$).
Then $\K^\circ(f,v)$ is the interior of $\K(f,v)$. 
\end{theorem}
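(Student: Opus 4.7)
The plan is to prove the two inclusions separately. For $\K^\circ(f,v) \subseteq \operatorname{int}\K(f,v)$, I would simply note that $\K^\circ(f,v)$ is defined by a finite list of strict polynomial inequalities $f(x)>0,\,D_v f(x)>0,\dots,D_v^{d-1}f(x)>0$, hence is open, and is obviously contained in $\K(f,v)$; this immediately places it inside $\operatorname{int}\K(f,v)$.

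For the harder inclusion $\operatorname{int}\K(f,v) \subseteq \K^\circ(f,v)$, my plan is to exploit the distinguished direction $v$. Fix $x \in \operatorname{int}\K(f,v)$. Interiority lets me choose $\epsilon>0$ so small that $y \coloneqq x - \epsilon v$ still lies in $\K(f,v)$, so $D_v^k f(y) \ge 0$ for every $k = 0,\dots,d-1$. The idea is then to analyze the univariate polynomial $g(t) \coloneqq f(y + tv)$ and recover $x$ at $t=\epsilon$. By Taylor's theorem, $g(t) = \sum_{k=0}^{d} \frac{D_v^k f(y)}{k!}\, t^k$; the first $d$ coefficients are nonnegative by the choice of $y$, while the leading coefficient $D_v^d f/d!$ is constant in the spatial variable by homogeneity, and evaluated at $v$ equals $f(v)$ by Euler's identity. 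Since $v \in \K^\circ(f,v)$ forces $f(v)>0$, this top coefficient is strictly positive. Thus $g$ is a polynomial with nonnegative coefficients and a strictly positive leading term, a property preserved by differentiation, so $g^{(k)}(\epsilon) > 0$ for each $k = 0,\dots,d-1$. But $g^{(k)}(\epsilon) = D_v^k f(x)$, so $x \in \K^\circ(f,v)$, as desired.

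The only mild subtlety is verifying that the $d$-th directional derivative of $f$ along $v$ is a strictly positive constant; this follows from homogeneity together with $f(v)>0$, which is precisely where the hypothesis $v \in \K^\circ(f,v)$ is used. The Taylor-sign lemma stated just before the theorem is not needed for this statement: the ``all nonnegative coefficients plus positive leading term'' trick short-circuits any sign analysis. (That lemma is, however, the natural tool for the companion assertion that $\K(f,v)$ is the closure of $\K^\circ(f,v)$, where one must rule out isolated boundary points at which some $D_v^k f$ vanishes to even order.)
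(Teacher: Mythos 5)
Your proof is correct, and it takes a genuinely different (and in fact simpler and more general) route than the paper. The paper proves the nontrivial inclusion in the opposite logical direction: it takes $x_0\in\K(f,v)\setminus\K^\circ(f,v)$, restricts the vanishing derivative $D_v^k f$ to the line $x_0+tv$, and invokes the Taylor-sign \Cref{lem:taylor-sign} in a case analysis (all derivatives vanish; first nonzero derivative of odd order; of even order with negative or positive sign) to argue that $x_0$ must be a boundary point, using the $\K$-Lorentzian hypothesis ($f>0$ on $\operatorname{int}\K$) to exclude the degenerate case. Your argument instead shows directly that any $x\in\operatorname{int}\K(f,v)$ satisfies the strict inequalities: stepping back to $y=x-\epsilon v\in\K(f,v)$, the univariate restriction $g(t)=f(y+tv)$ has nonnegative coefficients $D_v^k f(y)/k!$ for $k<d$ and constant top coefficient $D_v^d f/d!=f(v)>0$, and since this ``nonnegative coefficients, positive leading term'' property passes to all derivatives, $D_v^k f(x)=g^{(k)}(\epsilon)>0$ for $k=0,\dots,d-1$. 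This buys you three things: it avoids the paper's case analysis entirely (in particular the even-order, positive-sign case, which the paper only resolves by an iteration sketch over higher derivative constraints); it needs nothing about $f$ beyond homogeneity and $v\in\K^\circ(f,v)$ (i.e.\ $f(v)>0$), so the identity $\operatorname{int}\K(f,v)=\K^\circ(f,v)$ holds for arbitrary homogeneous forms, not just $\K$-Lorentzian ones; and, as you correctly observe, the boundary-behavior analysis via \Cref{lem:taylor-sign} is really the tool relevant to the separate question of whether $\K(f,v)=\overline{\K^\circ(f,v)}$, which the paper leaves open beyond the hyperbolic case.
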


\begin{proof}
By definition we have $\K^\circ(f,v) \subset \K(f,v)$, $\K^\circ(f,v)$ is open, and $\K(f,v)$ is closed. Hence
\[
  \overline{\K^\circ(f,v)} \subseteq \K(f,v).
\]
It remains to show that $\K^\circ(f,v)$ is the interior of $\K(f,v)$, i.e.\ that
every point of $\K(f,v)\setminus \K^\circ(f,v)$ lies on the boundary of $\K(f,v)$.

Let $x_0 \in \K(f,v) \setminus \K^\circ(f,v)$. Then at least one of
\[
  f(x_0),\ D_v f(x_0),\ \dots,\ D_v^{d-1} f(x_0)
\]
is equal to zero. Consider the univariate polynomial
\[
  h(t) \coloneqq f(x_0 + t v), \qquad t \in \R.
\]
Its Taylor expansion at $t=0$ is
\[
  h(t) = \sum_{k=0}^d \frac{1}{k!} D_v^k f(x_0)\, t^k.
\]

\medskip\noindent
\emph{Case~1: $D_v^k f(x_0) = 0$ for all $k=0,1,\dots,d-1$.}
Then all derivatives $h^{(k)}(0)$ vanish for $k=0,\dots,d-1$, so $h$ must be the
zero polynomial. Hence $f(x_0 + t v) = 0$ for all $t\in\R$. Since $v \in
\K^\circ(f,v)$ and $\K$ is proper, we can choose $t>0$ small enough so that
$x_0 + t v \in \operatorname{int} \K$. But $f>0$ on $\operatorname{int}\K$ for a
$\K$-Lorentzian polynomial, so this is impossible. Thus Case~1 cannot occur.

\medskip\noindent
\emph{Case II: $x_0$ is a common root of $f$ and $D_v f$, but not all derivatives
vanish.} Suppose $x_0 \in K(f,v)\setminus K^\circ(f,v)$ satisfies
\[
  f(x_0) = 0, \qquad D_v f(x_0) = 0,
\]
and that not all derivatives $D_v^k f(x_0)$ vanish for $k=2,\dots,d-1$. Let
$m \in \{2,\dots,d-1\}$ be the smallest index such that $D_v^m f(x_0) \neq 0$.
Consider the univariate polynomial
\[
  g(t) \coloneqq D_v f(x_0 + t v), \qquad t \in \R.
\]
Then
\[
  g(0) = D_v f(x_0) = 0, \qquad g^{(j)}(0) = D_v^{j+1} f(x_0)
  \quad\text{for all } j\ge 0.
\]
By choice of $m$, we have
\[
  g^{(j)}(0) = 0 \ \text{for } j=0,1,\dots,m-2,
  \qquad\text{and}\qquad
  g^{(m-1)}(0) = D_v^m f(x_0) \neq 0.
\]
Thus we are in the setting of Lemma~\ref{lem:taylor-sign} with $\ell = m-1$.
If $m-1$ is odd, the lemma implies that $g(t)$ changes sign in every
neighborhood of $t=0$, i.e.\ $D_v f(x_0+tv)$ takes negative values for $t$
arbitrarily close to $0$. This contradicts the assumption that $x_0$ is an
interior point of the superlevel set $\{x : D_v f(x)\ge 0\}$.

If $m-1$ is even and $g^{(m-1)}(0) = D_v^m f(x_0) < 0$, then by
Lemma~\ref{lem:taylor-sign} there exists $\delta>0$ such that
$g(t) < 0$ for all $0<|t|<\delta$, again contradicting interiority with respect
to the constraint $D_v f\ge 0$.

Finally, if $m-1$ is even and $D_v^m f(x_0) > 0$, then $g(t)>0$ for all
$0<|t|<\delta$ for some $\delta>0$, so the constraint $D_v f\ge 0$ alone does
not prevent $x_0$ from being interior. However, by definition of $K(f,v)$,
all higher directional derivatives $D_v^k f$ for $k\ge 2$ also impose
nonnegativity constraints. Repeating the same argument with the first $k$ for
which $D_v^k f(x_0)=0$ and applying Lemma~\ref{lem:taylor-sign} to the
univariate restriction
\[
  t \longmapsto D_v^k f(x_0 + t v),
\]
we obtain that for at least one of these derivative constraints the
corresponding superlevel set $\{x : D_v^k f(x)\ge 0\}$ has a genuine boundary
at $x_0$ (either by a sign change when the first nonzero derivative has odd
order, or by local negativity when it has even order and negative coefficient).
In all cases, there exist points arbitrarily close to $x_0$ violating at least
one of the defining inequalities of $K(f,v)$, so $x_0$ cannot be an interior
point of $K(f,v)$.

In either case, $x_0$ cannot be an interior point of $\K(f,v)$, so
$\K(f,v)\setminus \K^\circ(f,v)$ consists entirely of boundary points. Therefore
\[
  \operatorname{int}\K(f,v) = \K^\circ(f,v).
\]
\end{proof}

\begin{remark}[Closure of $\K^\circ(f,v)$]
By definition we have $\K^\circ(f,v)\subseteq \K(f,v)$, with $\K^\circ(f,v)$ open
and $\K(f,v)$ closed. Hence
\[
  \overline{\K^\circ(f,v)} \;\subseteq\; \K(f,v).
\]
In general, for an arbitrary closed semialgebraic set $S$, one cannot conclude
that $S$ is the closure of its interior: closed semialgebraic sets may contain
lower--dimensional components that are disjoint from $\overline{\operatorname{int}S}$.
Thus our arguments only guarantee the inclusion
$\overline{\K^\circ(f,v)} \subseteq \K(f,v)$ in full generality.

In the hyperbolic case, however, equality is known to hold. If $f$ is
hyperbolic with respect to $v$ and $f(v)>0$, then (see Renegar~\cite{Renegar}
and Saunderson--Parrilo~\cite{saundersonhyp})
\[
  \K^\circ(f,v) = \Lambda_{++}(f,v),
  \qquad
  \K(f,v) = \Lambda_{+}(f,v),
\]
where \begin{align*}
\Lambda_{++}(f,v)
  &= \{x \in \R^n : \text{all zeros of } t \mapsto f(x+tv) \text{ are real and negative}\} \\
  &= \{x \in \R^n : f(x+tv) = 0 \implies t < 0\},
\end{align*}
and 
\begin{align*}
\Lambda_{+}(f,v)
  &= \{x \in \R^n : \text{all zeros of } t \mapsto f(x+tv) \text{ are real and nonpositive}\} \\
  &= \{x \in \R^n : f(x+tv) = 0 \implies t \le 0\}.
\end{align*}
denote the open and closed
hyperbolicity cones of $f$ with respect to $v$ respectively. In particular,
$\Lambda_{+}(f,v) = \overline{\Lambda_{++}(f,v)}$, so in this case
\[
  \K(f,v) = \overline{\K^\circ(f,v)}.
\]
It is an open question in our setting to characterize those $\K$-Lorentzian
polynomials $f$ for which this equality holds beyond the hyperbolic case.
\end{remark}

It is natural to ask whether $\K(f,v)$ is convex, like a hyperbolicity cone,
when $f$ is $\K$-Lorentzian but not hyperbolic. In \Cref{ex:counter} we show that
for a Lorentzian but nonhyperbolic polynomial $f$, the cone $K(f,v)$ need not
be convex, and in that example the nonnegative orthant is contained in $\K(f,v)$.

\begin{lemma}\label{lemma:inclusion}
Let $\K$ be a proper convex cone with $v \in \operatorname{int}\K$, and let $f$ be
a $\K$-Lorentzian polynomial. Then $\K \subseteq \K(f,v)$.
\end{lemma}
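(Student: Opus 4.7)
The plan is to deduce everything from \Cref{prop:noncoeffs} together with the standard fact that a proper convex cone is the closure of its interior. The statement asks us to verify the $d$ polynomial inequalities defining $\K(f,v)$, namely $D_v^k f(x)\ge 0$ for $k=0,1,\dots,d-1$, at every point $x\in \K$.

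First I would apply \Cref{prop:noncoeffs}: since $f$ is $\K$-Lorentzian (equivalently $\K$-CLC) and $v\in\inter\K$, for every $x'\in\inter\K$ all coefficients of the univariate Taylor polynomial
\[
  f(x'+tv)\;=\;\sum_{k=0}^{d}\frac{t^{k}}{k!}D_v^{k}f(x')
\]
are strictly positive. In particular $D_v^{k}f(x')>0$ for all $x'\in\inter\K$ and all $k=0,1,\dots,d-1$ (and, as noted in the proof of \Cref{prop:connected}, the top coefficient $D_v^d f$ is a positive constant as well).

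Next I would fix an arbitrary $x\in\K$ and use the fact that, because $\K$ is a proper convex cone (in particular, a closed convex cone with nonempty interior), we have $\K=\overline{\inter\K}$. Pick any sequence $(x_m)_{m\ge 1}\subset\inter\K$ with $x_m\to x$; for instance $x_m=x+\tfrac{1}{m}v$ works since $v\in\inter\K$ and $\K$ is convex. From the previous step, $D_v^{k}f(x_m)>0$ for each $m$ and each $k\in\{0,1,\dots,d-1\}$. Each $D_v^{k}f$ is a polynomial, hence continuous, so passing to the limit $m\to\infty$ yields $D_v^{k}f(x)\ge 0$ for every such $k$. By the definition \eqref{eq:cone} of $\K(f,v)$ this means $x\in\K(f,v)$, and since $x\in\K$ was arbitrary we conclude $\K\subseteq\K(f,v)$.

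There is essentially no obstacle once \Cref{prop:noncoeffs} is available: the argument is just strict positivity on $\inter\K$ plus a continuity/closure step. The only mildly delicate point is the identity $\K=\overline{\inter\K}$, which must be invoked to promote strict inequalities on the interior to weak inequalities on the whole cone; this is standard for proper convex cones and requires no extra hypothesis on $f$.
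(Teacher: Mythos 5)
Your proof is correct and follows essentially the same route as the paper: \Cref{prop:noncoeffs} gives strict positivity of $D_v^k f$ on $\inter\K$, hence $\inter\K\subseteq\K^\circ(f,v)\subseteq\K(f,v)$, and then density of $\inter\K$ in $\K$ together with closedness of $\K(f,v)$ (which is exactly your sequence-plus-continuity step) yields $\K\subseteq\K(f,v)$. No issues.
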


\begin{proof}
By the equivalence between $\K$-Lorentzian and $\K$-completely log-concave
forms and
Proposition~\ref{prop:noncoeffs}, for every $x \in \operatorname{int}K$ we have
\[
  f(x) > 0,\quad D_v f(x) > 0,\quad \dots,\quad D_v^{d-1} f(x) > 0,
\]
so $\operatorname{int}\K \subseteq \K^\circ(f,v) \subseteq \K(f,v)$. Since
$\K(f,v)$ is closed and $\operatorname{int}\K$ is dense in $\K$, it follows that
$\K \subseteq \K(f,v)$.
\end{proof}

\begin{theorem}\label{them:convexcone}
Let $\K$ be a proper convex cone containing $v$ in its interior, and let $f$ be a $\K$-Lorentzian polynomial. Consider
\[
  \K(f,v) = \{x \in \R^{n}: f(x) \geq 0, D_{v}f(x) \geq 0, \dots, D_{v}^{d-1}f(x) \geq 0\}.
\]
If $f$ is $\K(f,v)$-Lorentzian with respect to $v$ (equivalently, $\K(f,v)$-CLC), then $\K(f,v)$ is convex.
\end{theorem}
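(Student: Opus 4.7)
The plan is to reduce the convexity of $\K(f,v)$ to the classical convexity of the hyperbolicity cone of $f$ with respect to $v$, by first establishing that $f$ is hyperbolic in direction $v$ and then appealing to G\aa rding's theorem together with the hyperbolic identification already recorded in the remark preceding the theorem.

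\smallskip\noindent
\textit{Step 1: hyperbolicity of $f$ with respect to $v$.}
I would specialize the $\K(f,v)$-Lorentzian hypothesis by taking $a_1=\cdots=a_{d-2}=v$. The quadratic $q:=D_v^{d-2}f$ then has exactly one positive eigenvalue with $q(v)>0$, so the standard reverse Cauchy--Schwarz inequality for Lorentzian forms,
\[
  (v^{\mathsf T}Qx)^2 \;\ge\; q(v)\,q(x), \qquad x\in\R^n,
\]
holds; equivalently, $t\mapsto q(x+tv)$ is real-rooted for every $x\in\R^n$. Because partial derivatives commute, each intermediate $D_v^k f$ is itself a $\K(f,v)$-Lorentzian form of degree $d-k$, so the same argument applies at every derivative level. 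Combining these Lorentzian signature conditions with the Newton-type coefficient inequalities implicit in the $\K(f,v)$-CLC property propagates real-rootedness from the base quadratic $q(x+tv)$ up through each $D_v^k f(x+tv)$ to $f(x+tv)$, and hence $f$ is hyperbolic with respect to $v$.

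\smallskip\noindent
\textit{Step 2: identification with the hyperbolicity cone and conclusion.}
Once $f$ is hyperbolic in direction $v$, the remark preceding the theorem yields $\K^\circ(f,v)=\Lambda_{++}(f,v)$ and $\K(f,v)=\Lambda_+(f,v)$. The inclusion $\Lambda_{++}(f,v)\subseteq \K^\circ(f,v)$ is immediate: if $x\in\Lambda_{++}(f,v)$ then $f(x+tv)=c\prod_{i=1}^d(t+\rho_i)$ with $c,\rho_i>0$, so each Taylor coefficient $D_v^k f(x)/k!$ is strictly positive. Conversely, $x\in\K^\circ(f,v)$ means $f(x+tv)$ has only positive coefficients, so any real root is negative; together with real-rootedness from Step~1 this forces $x\in\Lambda_{++}(f,v)$. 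By G\aa rding's theorem, $\Lambda_{++}(f,v)$ is a convex open cone, so its closure $\Lambda_+(f,v)=\K(f,v)$ is convex as well.

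\smallskip\noindent
\textit{Main obstacle.}
The delicate point is the real-rootedness propagation in Step~1. Rolle's theorem transports real-rootedness only from an antiderivative to its derivative, so lifting real-rootedness from $q(x+tv)=D_v^{d-2}f(x+tv)$ back to $f(x+tv)$ requires the full Newton inequalities on the Taylor coefficients $(D_v^k f(x))_{k=0}^d$, which are strictly stronger than the log-concavity of this sequence guaranteed by the CLC property alone. Extracting these Newton inequalities from the $\K(f,v)$-Lorentzian signature conditions at every intermediate derivative level, rather than from CLC alone, is where the technical work lies; the signature condition ``exactly one positive eigenvalue'' at each derivative level is precisely the extra ingredient beyond log-concavity that should close the gap.
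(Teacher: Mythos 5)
Your Step~1 is a genuine gap, and it is the step on which the whole strategy rests. You need real-rootedness of $t\mapsto f(x+tv)$ for \emph{every} $x\in\R^n$, but the $\K(f,v)$-Lorentzian/CLC hypothesis only gives you signature and log-concavity information on the cone itself, and the mechanism you propose for lifting real-rootedness from the quadratic $D_v^{d-2}f(x+tv)$ back up to $f(x+tv)$ does not exist: Rolle/Gauss--Lucas transports real-rootedness only from a polynomial to its derivative, never upward, and Newton-type (or ultra-log-concavity) inequalities on the Taylor coefficients $(D_v^k f(x))_k$ are necessary but far from sufficient for real-rootedness. The paper's own \Cref{ex:counter}, $f=4x_1^3+15x_1^2x_2+18x_1x_2^2+6x_2^3$, is Lorentzian precisely because its coefficients are ultra log-concave, yet $f(x+t(1,1))$ fails to be real-rooted at $x=(2,1)$; so coefficient log-concavity plus one-positive-eigenvalue conditions at the quadratic level cannot ``close the gap'' as you hope. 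Your own closing paragraph concedes that this propagation is unproved, which means the proposal establishes hyperbolicity of $f$ with respect to $v$ nowhere, and it is not even clear that hyperbolicity is a consequence of the theorem's hypotheses at all; the theorem as stated does not need it.

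The paper's proof avoids hyperbolicity entirely and is much shorter: since $f$ is $\K(f,v)$-CLC, each directional derivative $g_k=D_v^k f$ ($k=0,\dots,d-1$) is log-concave on $\K(f,v)$, so for $x_1,x_2\in\K(f,v)$ one has $g_k(\lambda x_1+(1-\lambda)x_2)\ge g_k(x_1)^\lambda g_k(x_2)^{1-\lambda}\ge 0$, hence each set $\{g_k\ge 0\}$ is convex, and $\K(f,v)$ is their intersection. Your Step~2 (identifying $\K^\circ(f,v)$ with $\Lambda_{++}(f,v)$ and invoking G\aa rding) would be fine \emph{if} hyperbolicity were available, but as written the argument proves the theorem only for hyperbolic $f$, which is a strictly smaller class than the one the theorem addresses. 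To repair your proposal you would either have to prove that the hypotheses force hyperbolicity with respect to $v$ (a substantially stronger and currently unsupported claim), or abandon the reduction and argue directly from log-concavity as the paper does.
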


\begin{proof}
Since $f$ is $\K(f,v)$-Lorentzian, each function
\[
  g_k(x) \coloneqq D_v^k f(x), \qquad k=0,1,\dots,d-1,
\]
is log-concave on $\K(f,v)$. In particular, for any $x_1,x_2\in \K(f,v)$ and
$\lambda\in[0,1]$,
\[
  g_k(\lambda x_1 + (1-\lambda)x_2)
    \;\ge\; g_k(x_1)^\lambda\, g_k(x_2)^{1-\lambda}
    \;\ge\; 0,
\]
because $g_k(x_1),g_k(x_2)\ge 0$ by the definition of $\K(f,v)$. Thus each
superlevel set $\{x : g_k(x)\ge 0\}$ is convex. Since $\K(f,v)$ is the
intersection of these convex sets over $k=0,\dots,d-1$, it is itself convex.
\end{proof}

\begin{remark}
By \Cref{lemma:inclusion}, any proper convex cone $\K$ with
$v \in \operatorname{int}\K$ on which $f$ is $\K$-Lorentzian is contained in
$\K(f,v)$. In particular, if $\K(f,v)$ is convex, then among all convex cones containing $v$ in their interior on which $f$ is Lorentzian, $\K(f,v)$ is the largest with respect to inclusion.
\end{remark}
\noindent Note that convexity of $\K(f,v)$ imposes a simple necessary condition: Express
\[
  \K(f,v)
  = \bigcap_{k=0}^{d-1} S_k,
  \qquad
  S_k := \{x \in \R^n : D_v^k f(x) \ge 0\},
\]
we see that if $\K(f,v)$ is convex and nonempty, then each $S_k$ must be convex.
Equivalently, each directional derivative $g_k(x) := D_v^k f(x)$ is
\emph{quasi-concave} on $\K(f,v)$ in the sense that
$g_k(\lambda x_1 + (1-\lambda)x_2) \ge \min\{g_k(x_1),g_k(x_2)\}$ for all
$x_1,x_2 \in \K(f,v)$ and $\lambda \in [0,1]$. Our sufficient condition in
Theorem~\ref{them:convexcone} (namely that $f$ is $\K(f,v)$-Lorentzian, so that
each $D_v^k f$ is log-concave on $\K(f,v)$) is strictly stronger than this
necessary quasi-concavity requirement, and thus a naive converse to
Theorem~\ref{them:convexcone} is not expected to hold in general. This leads to
the following open problem: characterize those semialgebraic proper convex cones
$\K \subset \R^n$ for which there exist a polynomial $f$ and a direction
$v \in \operatorname{int}\K$ such that $f$ is $\K$-Lorentzian and $\K = \K(f,v)$.
There are several positive examples. Spectrahedral cones are defined by
determinantal polynomials, and hyperbolicity cones are defined by hyperbolic
polynomials; these polynomials are $\K$-Lorentzian for their associated cones.
Thus classical hyperbolic programming fits into our framework, and suggests the possibility of a broader ``$\K$-Lorentzian programming'' theory extending hyperbolic programming. 

The discussion above suggests that cones of the form $\K(f,v)$ are highly
structured. If $\K = \K(f,v)$ for some $\K$-Lorentzian polynomial $f$, then on the interior, $\inter \K$, the functions $D_v^k f$ ($k=0,\dots,d-1$) form a tower of positive, log-concave homogeneous functions whose superlevel sets are convex and whose zero sets cover the algebraic boundary of $\K$. Thus any convex cone $\K$ that
admits such a ``Lorentzian barrier tower'' is a natural candidate for arising as $\K(f,v)$.

At present we only have partial necessary conditions (e.g., quasi-concavity of the directional derivatives along $v$) and strong sufficient conditions (such as $\K$-Lorentzianity of $f$), and a general characterization of cones of the form $\K(f,v)$ remains open.

\begin{example}\label{ex:counter}
Let
\[
f(x,y,z)=z\bigl(4x^3+15x^2y+18xy^2+6y^3\bigr)
=4x^3z+15x^2yz+18xy^2z+6y^3z,
\qquad v=(1,1,1).
\]
Therefore,
\begin{equation}\label{eq:Kfv-explicit}
\K(f,v)=\Bigl\{(x,y,z)\in\R^3:\ 
\begin{array}{l}
z\bigl(4x^3+15x^2y+18xy^2+6y^3\bigr)\ge 0,\\[2pt]
4x^3+15x^2y+18xy^2+6y^3+27zx^2+66zxy+36zy^2\ge 0,\\[2pt]
54x^2+132xy+72y^2+120zx+138zy\ge 0,\\[2pt]
360x+414y+258z\ge 0
\end{array}
\Bigr\}.
\end{equation}

\medskip
\noindent
On the affine slice $x+y+z=1$, by a uniform random sampling search in MATLAB, we found points
\[
p=(1.2575,\,-0.7975,\,0.54),\qquad
q=(0.2375,\,-0.14125,\,0.90375),
\]
with $p,q\in \K(f,v)$ but midpoint
\[
m=\tfrac12(p+q)=(0.7475,\,-0.469375,\,0.721875)
\]
violating the constraint:
\[
D_v f(m)\approx -1.975\times 10^{-2}<0.
\]
Hence $m\notin\K(f,v)$ and $\K(f,v)$ is nonconvex, already visible on the
2D slice (restricted view range) plot as a ``kinked" region shown in \Cref{fig:non_convex_cone}. 
\begin{figure}[htbp] 
  \centering
    \includegraphics[width=\textwidth]{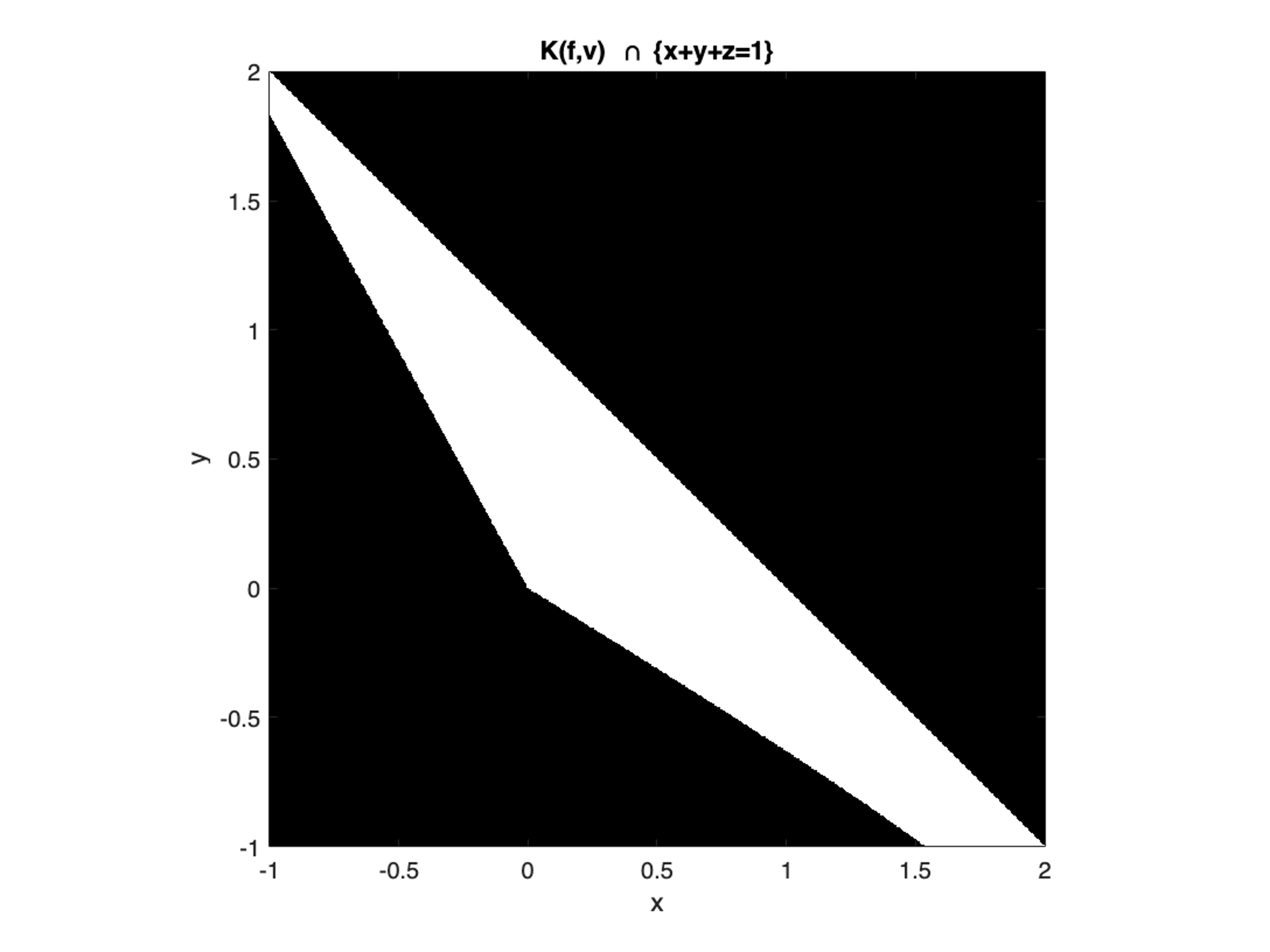}
    \caption{$\K(f,v)\cap\{x+y+z=1\}$. White indicates feasibility.}
    \label{fig:non_convex_cone}
    \end{figure}
Therefore $\K(f,v)\cap H$ is not convex, hence
$\K(f,v)$ is not convex (at least on the connected component containing $p$ and $q$).

\end{example}

\section{Rayleigh Differences and $\K$-Semipositive Cones} \label{sec:rayleigh}

Let $f$ be a multi-affine polynomial. Then $f$ is real stable if and only if
the Rayleigh difference polynomials
\[
  \Delta_{ij} f(x)
  \;\coloneqq\;
  \frac{\partial f}{\partial x_i}(x)\,
  \frac{\partial f}{\partial x_j}(x)
  - f(x)\,
    \frac{\partial^2 f}{\partial x_i\partial x_j}(x)
\]
are nonnegative for all $x\in\R^n$ and all $i,j\in[n]$
\cite[Theorem~5.6]{brandenhalf}. Here we record a basic identity relating
Rayleigh differences and the Hessian of $\log f$.

\begin{lemma}\label{lemma:logmultiaffine-correct}
Let $f \in C^{2}(K;\R)$ with $f(x) > 0$ for all $x \in K$.
Then for every $x\in K$ and every $i,j\in[n]$,
\[
  \Delta_{ij} f(x)
  := \frac{\partial f}{\partial x_i}(x)\,\frac{\partial f}{\partial x_j}(x)
     - f(x)\,\frac{\partial^2 f}{\partial x_i\partial x_j}(x)
  = -\,f(x)^2\,\frac{\partial^2}{\partial x_i\partial x_j}\log f(x).
\]
In particular,
\[
  \Delta_{ij} f(x) \ge 0
  \quad\Longleftrightarrow\quad
  \frac{\partial^2}{\partial x_i\partial x_j}\log f(x)\le 0.
\]
If $f$ is log--concave on $\K$, then the Hessian $\nabla^2 \log f(x)$ is
negative semidefinite for every $x\in \K$, which is strictly stronger than the
entrywise conditions
$\frac{\partial^2}{\partial x_i\partial x_j}\log f(x)\le 0$ (and hence
stronger than $\Delta_{ij}f(x)\ge 0$) for all $i,j$.
\end{lemma}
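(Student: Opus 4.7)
The plan is to prove the identity by a direct chain-rule calculation and then read off both the equivalence and the log-concavity comparison from the positivity of $f(x)^2$.

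First I would observe that, since $f > 0$ and $f \in C^2(K;\R)$, the composition $\log f$ is itself $C^2$ on $K$, so both sides of the claimed identity are well-defined. Applying the chain rule gives $\partial_{x_i}\log f(x) = f(x)^{-1}\partial_{x_i}f(x)$, and differentiating once more with respect to $x_j$ via the product rule produces
\[
\frac{\partial^2}{\partial x_i\partial x_j}\log f(x) = \frac{1}{f(x)}\frac{\partial^2 f}{\partial x_i\partial x_j}(x) - \frac{1}{f(x)^2}\frac{\partial f}{\partial x_i}(x)\frac{\partial f}{\partial x_j}(x).
\]
Multiplying through by $-f(x)^2$ yields exactly $\Delta_{ij}f(x)$ on the right-hand side, which is the claimed identity. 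This is a routine calculation that I will not belabor.

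Once the identity is in hand, the equivalence $\Delta_{ij}f(x) \ge 0 \Leftrightarrow \partial^2_{x_i x_j}\log f(x) \le 0$ is immediate, since $f(x)^2 > 0$ on $K$ and the two sides of the identity differ only by this positive factor together with an overall sign. For the final remark, log-concavity of $f$ on $\K$ means $\nabla^2\log f(x) \preceq 0$ pointwise, equivalently $u^{\mathsf T}\nabla^2\log f(x)\,u \le 0$ for every $u \in \R^n$. Via the identity this translates to positive semidefiniteness of the Rayleigh matrix $M_f(x) = -f(x)^2\nabla^2\log f(x) = \nabla f(x)\nabla f(x)^{\mathsf T} - f(x)\nabla^2 f(x)$, i.e.\ $u^{\mathsf T}M_f(x)u \ge 0$ for all $u$. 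Since positive semidefiniteness is a global quadratic-form condition while the scalar inequalities $\Delta_{ij}f(x) \ge 0$ only constrain individual entries of $M_f(x)$, the log-concavity hypothesis is strictly stronger than the entry-wise Rayleigh conditions.

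There is no real obstacle here: the whole argument reduces to a one-line calculus identity followed by an observation about quadratic forms. The only care needed is to keep the sign and the factor of $f(x)^2$ straight when converting between the two forms of the inequality.
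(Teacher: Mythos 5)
Your proposal is correct and follows essentially the same route as the paper: the lemma is recorded there without a separate argument, and the chain-rule identity $\nabla^2\log f(x)=\tfrac{1}{f(x)}\nabla^2 f(x)-\tfrac{1}{f(x)^2}\nabla f(x)\nabla f(x)^{\mathsf T}$, multiplied by $-f(x)^2$, is exactly the computation the paper itself uses (see the proof of \Cref{them:rayleighdifference}). Your treatment of the equivalence via $f(x)^2>0$ and of the log-concavity comparison matches the paper's intended reading, so nothing further is needed.
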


More generally, for $f\in\R_n^d[x]$ and directions $v,w\in\R^n$ we write
\[
  \Delta_{v,w} f(x)
  \;\coloneqq\;
  D_v f(x)\,D_w f(x) - f(x)\,D_v D_w f(x),
\]
the Rayleigh difference polynomial in directions $v,w$, of degree $2d-2$ in
$x$. For hyperbolic polynomials these Rayleigh polynomials are globally
nonnegative. For instance, consider
\[
  f(x_1,x_2) = -2x_1^3 + 12x_1^2x_2 + 18x_1x_2^2 - 8x_2^3,
\]
which is hyperbolic with respect to $(1,1)$. The Rayleigh difference
$\Delta_{v,w} f$ for suitable $v,w$ is a quartic bivariate polynomial. In this
binary quartic setting, global nonnegativity of $\Delta_{v,w} f$ is equivalent
to a sum-of-squares representation
\[
  \Delta_{v,w} f(x)
  =
  q(x)^{\mathsf T} M\,q(x),
  \qquad
  q(x) = \begin{bmatrix} x_1^2 & x_1x_2 & x_2^2 \end{bmatrix},
\]
with $M\succeq 0$, and this viewpoint underlies semidefinite descriptions of
hyperbolicity cones via nonnegative Rayleigh polynomials; see
\cite{CynthiaSOS}.

For $\K$-Lorentzian polynomials the situation is subtler. Let
$f\in\R_n^d[x]$ be $\K$-Lorentzian over a proper convex cone $\K$ and
$v\in\operatorname{int}\K$. In general, the Rayleigh difference
$\Delta_{v,w} f(x)$ need not be nonnegative on all of $\R^n$, so a global
sum-of-squares representation need not exist. However, it can still be
nonnegative when restricted to the cone. For example, in
\Cref{ex:counter} the Rayleigh difference with respect to $v=(1,1)$ and
$w=(2,1)$ is
\[
  \Delta_{v,w} f(x)
  = 3\bigl(
      119 x_1^4 + 580 x_1^3 x_2 + 1002 x_1^2 x_2^2
      + 768 x_1 x_2^3 + 240 x_2^4
    \bigr),
\]
a quartic bivariate polynomial which is not globally nonnegative, but is
nonnegative on the nonnegative orthant $\R^2_{\ge0}$. In particular, this
quartic is not a sum of squares, yet it satisfies a Rayleigh-type inequality
on $\R^2_{\ge0}$. This motivates our cone-restricted Rayleigh inequalities for
$\K$-Lorentzian polynomials.

\begin{theorem}\label{them:rayleighdifference}
Let $f \in \R_n^d[x]$ be a $\K$-Lorentzian polynomial over a proper convex cone
$\K \subset \R^n$. Then for every $x \in \K$ and every direction $u \in \R^n$,
the Rayleigh-type difference
\[
  R_u f(x)
  \;:=\;
  \bigl(D_u f(x)\bigr)^2 - f(x)\,D_u^2 f(x)
\]
is nonnegative. In particular, $R_u f(x) \ge 0$ on $\K$ for all $u \in \K$.
\end{theorem}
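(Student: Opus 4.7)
The plan is to reduce the scalar Rayleigh inequality to positive semidefiniteness of the Rayleigh matrix $M_f(x)$ on $\K$, and then deduce the latter from the log-concavity of $f$ that is built into the $\K$-Lorentzian (equivalently, $\K$-CLC) hypothesis. The algebraic heart of the argument is the identity
\[
  \nabla^2 \log f(x)
  = \frac{\nabla^2 f(x)}{f(x)} - \frac{\nabla f(x)\,\nabla f(x)^{\mathsf T}}{f(x)^2}
  = -\frac{1}{f(x)^2}\bigl(\nabla f(x)\,\nabla f(x)^{\mathsf T} - f(x)\,\nabla^2 f(x)\bigr)
  = -\frac{M_f(x)}{f(x)^2},
\]
valid wherever $f(x) > 0$.

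First, I would work on $\inter\K$. By the equivalence of $\K$-Lorentzian and $\K$-CLC (\cite{GPlorentzian}, quoted just before \Cref{def:clc}), taking $m=0$ in \Cref{def:clc} shows that $f$ is log-concave on $\inter\K$; in particular $f > 0$ there (this also follows from \Cref{prop:noncoeffs}, whose positive-coefficient conclusion includes the $t^0$ coefficient $f(x)$). Log-concavity of $f$ on $\inter\K$ is equivalent to $\nabla^2\log f(x) \preceq 0$ for every $x\in\inter\K$, and the displayed identity then gives $M_f(x)\succeq 0$ on $\inter\K$. Since
\[
  R_u f(x) = (D_u f(x))^2 - f(x)\,D_u^2 f(x) = u^{\mathsf T} M_f(x)\, u,
\]
this yields $R_u f(x) \ge 0$ for every $x\in\inter\K$ and every $u\in\R^n$.

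Second, I would extend to all of $\K$ by continuity. The map $x\mapsto R_u f(x)$ is a polynomial (and hence continuous) for each fixed $u$, and $\inter\K$ is dense in $\K$ because $\K$ is a proper convex cone with nonempty interior. Passing to the limit preserves the weak inequality, so $R_u f(x)\ge 0$ for all $x\in\K$ and all $u\in\R^n$. The stated specialization $u\in\K$ is then immediate.

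The main obstacle, if any, is verifying that the $\K$-Lorentzian definition genuinely delivers pointwise log-concavity of $f$ on $\inter\K$, since \Cref{def:clc} is phrased via directional derivatives rather than directly as a Hessian condition. This is handled cleanly by invoking the equivalence of $\K$-Lorentzian and $\K$-CLC (\cite[Theorem~4.10]{GPlorentzian}) and reading off the $m=0$ case of \Cref{def:clc}; once that step is in hand, the rest reduces to the elementary identity above and a density argument.
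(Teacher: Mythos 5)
Your proposal is correct and follows essentially the same route as the paper's proof: both deduce $M_f(x)\succeq 0$ on $\inter\K$ from the $\K$-CLC/log-concavity of $f$ via the identity $M_f(x) = -f(x)^2\,\nabla^2\log f(x)$, identify $R_u f(x) = u^{\mathsf T}M_f(x)u$, and extend to $\K$ by continuity. Your version is slightly more careful in making explicit where $f>0$ on $\inter\K$ is needed and in performing the limiting step at the scalar level, but there is no substantive difference.
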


\begin{proof}
Set
\[
  M(x) \coloneqq \nabla f(x)\,\nabla f(x)^{\mathsf T} - f(x)\,H_f(x),
\]
where $H_f(x) := \nabla^{2} f(x)$ denote the Hessian of $f$ at $x$.
. Since $f$ is $\K$-Lorentzian, it is
$\K$-completely log-concave, so $-\nabla^2 \log f(x)$ is positive semidefinite
for all $x \in \inter \K$. Using
\[
  \nabla^2 \log f(x) = \frac{1}{f(x)} H_f(x)
                      - \frac{1}{f(x)^2} \nabla f(x)\,\nabla f(x)^{\mathsf T},
\]
we obtain
\[
  -\nabla^2 \log f(x)
  = \frac{1}{f(x)^2} M(x),
\]
so $M(x)$ is positive semidefinite for all $x \in \inter \K$, and by continuity
also for $x \in \K$.

For any $u \in \R^n$ we then have $u^{\mathsf T} M(x) u \ge 0$. A direct
computation shows
\[
  u^{\mathsf T} M(x) u
  = \bigl(\nabla f(x)^{\mathsf T} u\bigr)^2
    - f(x)\,u^{\mathsf T} H_f(x) u
  = \bigl(D_u f(x)\bigr)^2 - f(x)\,D_u^2 f(x)
  = R_u f(x),
\]
so $R_u f(x) \ge 0$ for all $x \in K$ and all $u \in \R^n$.
\end{proof}

Define the two-direction Rayleigh difference at $x$ by
\[
  R_{v,w} f(x)
  \coloneqq D_v f(x)\,D_w f(x) - f(x)\,D_v D_w f(x),
  \qquad v,w \in \R^n,
\]
and keep the notation
\[
  M(x) \coloneqq \nabla f(x)\,\nabla f(x)^{\mathsf T} - f(x)\,H_f(x),
\]
so that $R_{v,w} f(x) = v^{\mathsf T} M(x)\,w$.

\begin{proposition}\label{prop:two-dir-rayleigh}
Let $f \in \R_n^d[x]$ be $\K$-Lorentzian over a proper convex cone
$\K \subset \R^n$, so that $M(x)\succeq 0$ for all $x\in \inter \K$.
Fix $x\in \inter \K$. Then the following are equivalent:
\begin{enumerate}[(i)]
  \item $R_{v,w} f(x) \ge 0$ for all $v,w \in \K$.
  \item $v^{\mathsf T}M(x)\,w \ge 0$ for all $v,w \in \K$.
  \item The cone $\K$ is \emph{acute} with respect to the symmetric
  bilinear form
  \[
    \langle v,w\rangle_{M(x)} \coloneqq v^{\mathsf T} M(x)\,w,
  \]
  i.e., $\langle v,w\rangle_{M(x)} \ge 0$ for all $v,w \in \K$.
\end{enumerate}
If, in addition, $\K$ is finitely generated,
\[
  \K = \operatorname{cone}\{u_1,\dots,u_m\},
\]
then these are further equivalent to
\begin{enumerate}[(iv)]
  \item $u_i^{\mathsf T} M(x)\,u_j \ge 0$ for all $i,j \in \{1,\dots,m\}$.
\end{enumerate}
\end{proposition}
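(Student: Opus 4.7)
The plan is to observe that the three conditions (i)--(iii) are essentially tautological once one records the identity $R_{v,w}f(x)=v^{\mathsf T}M(x)w$, and that the polyhedral reduction (iv) is the standard passage from bilinearity on a finitely generated cone to nonnegativity on generators. So the proof is a short formal argument rather than an analytic one, and there is no real obstacle beyond organizing the equivalences.

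First I would establish (i) $\Leftrightarrow$ (ii). This is literally the identity
\[
  R_{v,w}f(x) = D_v f(x)\,D_w f(x) - f(x)\,D_v D_w f(x) = v^{\mathsf T}\!\bigl(\nabla f(x)\nabla f(x)^{\mathsf T} - f(x)H_f(x)\bigr)w = v^{\mathsf T}M(x)\,w,
\]
which is recorded in the sentence preceding the proposition and is a routine computation using $D_v f(x) = \nabla f(x)^{\mathsf T}v$ and $D_v D_w f(x) = v^{\mathsf T}H_f(x)w$. Since this identity holds for all $v,w\in\R^n$, the inequalities (i) and (ii) are the same statement. The equivalence (ii) $\Leftrightarrow$ (iii) is then simply the definition of acuteness of $\K$ with respect to $\langle\cdot,\cdot\rangle_{M(x)}$ as given in the statement of the proposition.

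Next I would prove (ii) $\Leftrightarrow$ (iv) under the additional polyhedrality assumption. The implication (ii) $\Rightarrow$ (iv) is immediate since each generator $u_i$ lies in $\K$, so the inequality $v^{\mathsf T}M(x)w\ge 0$ applied at $v=u_i$, $w=u_j$ gives $u_i^{\mathsf T}M(x)u_j\ge 0$. For the converse (iv) $\Rightarrow$ (ii), given arbitrary $v,w\in\K = \operatorname{cone}\{u_1,\dots,u_m\}$, write $v=\sum_{i=1}^{m}\lambda_i u_i$ and $w=\sum_{j=1}^{m}\mu_j u_j$ with $\lambda_i,\mu_j\ge 0$, and expand bilinearly:
\[
  v^{\mathsf T}M(x)w = \sum_{i,j=1}^{m}\lambda_i\mu_j\, u_i^{\mathsf T}M(x)u_j \;\ge\; 0,
\]
where the inequality uses (iv) together with $\lambda_i\mu_j\ge 0$. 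This closes the loop.

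The only mildly delicate point is that $\K$-Lorentzianity (via \Cref{them:rayleighdifference} and \Cref{lemma:logmultiaffine-correct}) guarantees $M(x)\succeq 0$ on $\inter\K$, so the diagonal inequalities $v^{\mathsf T}M(x)v\ge 0$ for $v\in\R^n$ are automatic; the content of the proposition is that the \emph{off-diagonal} bilinear values $v^{\mathsf T}M(x)w$ for distinct $v,w\in\K$ are controlled precisely by the geometric acuteness of $\K$ relative to the quadratic form $M(x)$, which need not follow from positive semidefiniteness alone. Thus the real role of the proposition is conceptual, isolating acuteness as the correct cone-geometric hypothesis, and the proof itself is a direct chain of equivalences with no analytic obstacle.
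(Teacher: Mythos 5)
Your proposal is correct and follows essentially the same route as the paper's proof: the identity $R_{v,w}f(x)=v^{\mathsf T}M(x)w$ gives (i)$\Leftrightarrow$(ii), acuteness is (iii) by definition, and the polyhedral case (iv) follows by bilinear expansion over the generators with nonnegative coefficients. No gaps.
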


\begin{proof}
The equivalence (i) $\Leftrightarrow$ (ii) is just the identity
$R_{v,w} f(x) = v^{\mathsf T} M(x)\,w$. The equivalence (ii)
$\Leftrightarrow$ (iii) is a matter of terminology: a cone is acute with
respect to a symmetric bilinear form $\beta(\cdot,\cdot)$ if
$\beta(v,w)\ge 0$ for all $v,w$ in the cone, and here we take
$\beta(v,w) = v^{\mathsf T} M(x)\,w$.

Now assume $K = \operatorname{cone}\{u_1,\dots,u_m\}$. If (iv) holds and
$v,w\in K$ are written as
\[
  v = \sum_{i=1}^m \alpha_i u_i,\qquad
  w = \sum_{j=1}^m \beta_j u_j,
  \qquad \alpha_i,\beta_j \ge 0,
\]
then by bilinearity of $\langle\cdot,\cdot\rangle_{M(x)}$ we obtain
\[
  v^{\mathsf T}M(x)\,w
  = \sum_{i,j} \alpha_i \beta_j\,u_i^{\mathsf T}M(x)\,u_j
  \ge 0,
\]
since each term in the sum is nonnegative. Thus (iv) implies (ii).
Conversely, if (ii) holds, then in particular
$u_i^{\mathsf T} M(x)\,u_j \ge 0$ for all $i,j$, so (iv) holds. This
gives the equivalence (ii) $\Leftrightarrow$ (iv).
\end{proof}

\begin{corollary}\label{cor:two-dir-rayleigh}
Let $f$ be $\K$-Lorentzian and $M(x)$ as above. For each $x\in \inter \K$, the
diagonal Rayleigh inequality
\[
  (D_u f(x))^2 - f(x)\,D_u^2 f(x) = u^{\mathsf T}M(x)\,u \;\ge 0
\]
holds for all $u\in\R^n$. Moreover, the two-direction Rayleigh difference
\[
  R_{v,w} f(x)
  = D_v f(x)\,D_w f(x) - f(x)\,D_v D_w f(x)
\]
is nonnegative for all $v,w\in \K$ if and only if the cone $\K$ is acute with
respect to the bilinear form $\langle\cdot,\cdot\rangle_{M(x)}$, equivalently,
if $u_i^{\mathsf T}M(x)\,u_j \ge 0$ for all pairs of generating rays
$u_i,u_j$ of $\K$.
\end{corollary}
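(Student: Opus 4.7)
The statement is essentially a consolidation of Theorem \ref{them:rayleighdifference} and Proposition \ref{prop:two-dir-rayleigh}, so the plan is to cite and recombine these, not to prove anything fundamentally new.

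First, for the diagonal inequality I would invoke Theorem \ref{them:rayleighdifference} directly: it gives $R_u f(x) = (D_u f(x))^2 - f(x) D_u^2 f(x) \ge 0$ for all $x \in \K$ and all $u \in \R^n$, since $M(x) \succeq 0$ on $\inter \K$ by the $\K$-CLC property combined with the identity $-\nabla^2 \log f(x) = f(x)^{-2} M(x)$. The equality $R_u f(x) = u^{\mathsf T} M(x) u$ is a one-line expansion using $D_u f(x) = \nabla f(x)^{\mathsf T} u$ and $D_u^2 f(x) = u^{\mathsf T} H_f(x) u$, so it needs no separate argument.

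Second, for the two-direction claim, I would observe the identity $R_{v,w} f(x) = v^{\mathsf T} M(x) w$ by the same bilinear expansion. This is exactly the content of the equivalence (i)$\Leftrightarrow$(ii) in Proposition \ref{prop:two-dir-rayleigh}. The equivalence with acuteness of $\K$ relative to $\langle \cdot,\cdot\rangle_{M(x)}$ is then purely a definitional rephrasing: a cone is acute with respect to a symmetric bilinear form $\beta$ precisely when $\beta(v,w) \ge 0$ for $v,w \in \K$. Finally, the reduction to generating rays when $\K = \operatorname{cone}\{u_1,\dots,u_m\}$ is obtained by writing $v = \sum \alpha_i u_i$ and $w = \sum \beta_j u_j$ with $\alpha_i, \beta_j \ge 0$ and using bilinearity, as in the proof of (ii)$\Leftrightarrow$(iv) in the proposition; the converse direction is immediate since each $u_i \in \K$.

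There is no genuine obstacle here: the substantive positive semidefiniteness fact $M(x) \succeq 0$ and the characterization via generating rays were both already established. The corollary is a summary, and the only care required is to state the identity $R_{v,w} f(x) = v^{\mathsf T} M(x) w$ explicitly so that acuteness of $\K$ with respect to $\langle \cdot,\cdot\rangle_{M(x)}$ is visibly equivalent to nonnegativity of the two-direction Rayleigh differences. Thus the proof would read: \emph{The diagonal inequality follows from Theorem \ref{them:rayleighdifference} with the expansion $u^{\mathsf T}M(x)u = (D_u f(x))^2 - f(x) D_u^2 f(x)$. The equivalence of $R_{v,w} f(x) \ge 0$ on $\K$ with acuteness of $\K$ with respect to $\langle \cdot,\cdot\rangle_{M(x)}$, and with nonnegativity on generating rays in the finitely generated case, is exactly Proposition \ref{prop:two-dir-rayleigh}.}
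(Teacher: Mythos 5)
Your proposal is correct and matches the paper's intent exactly: the corollary is stated as an immediate consequence of \Cref{them:rayleighdifference} (which gives $M(x)\succeq 0$ on $\inter\K$ and the diagonal identity $R_u f(x)=u^{\mathsf T}M(x)u$) and \Cref{prop:two-dir-rayleigh} (which gives the acuteness equivalence and the reduction to generating rays in the finitely generated case), and the paper supplies no separate argument beyond these. Your only added care — making the identity $R_{v,w}f(x)=v^{\mathsf T}M(x)w$ explicit — is exactly the right hinge, so there is nothing to correct.
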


\begin{remark}
In the special case $\K = \R^n_{\ge 0}$ with standard basis $(e_1,\dots,e_n)$,
the entries $e_i^{\mathsf T} M_f(x)\,e_j$ are exactly the coordinate Rayleigh
differences
\[
  \Delta_{ij} f(x)
  \coloneqq
  \frac{\partial f}{\partial x_i}(x)\,
  \frac{\partial f}{\partial x_j}(x)
  - f(x)\,\frac{\partial^2 f}{\partial x_i \partial x_j}(x).
\]
Thus, if a multi-affine $f$ satisfies $\Delta_{ij} f(x)\ge0$ for all $x$ and
all $i,j$, then $\R^n_{\ge0} = \operatorname{cone}\{e_1,\dots,e_n\}$ is acute
with respect to $M_f(x)$, and by Proposition~\ref{prop:two-dir-rayleigh} we
obtain
\[
  R_{v,w} f(x) \;\ge\; 0
  \quad\text{for all }x\in\R^n \text{ and all } v,w\in\R^n_{\ge0}.
\]
For real stable multi-affine polynomials this recovers the classical global
Rayleigh inequalities $\Delta_{ij} f(x)\ge0$ and extends them from coordinate
directions to arbitrary nonnegative directions.
\end{remark}

\subsection*{Acuteness for quadratic forms vs.\ Rayleigh matrices}

We briefly distinguish two notions of acuteness that appear in the
$\K$-Lorentzian setting.

\begin{definition}\label{def:acute-Q}
Let $\K\subset\R^n$ be a closed convex cone and let $Q\in\R^{n\times n}$ be
symmetric. We say that $\K$ is \emph{acute with respect to $Q$} if
\[
  Q(\K) \subseteq \K^*
  \quad\Longleftrightarrow\quad
  y^{\mathsf T} Q x \ge 0 \quad\text{for all } x,y\in \K,
\]
where $\K^* := \{y\in\R^n : y^{\mathsf T}x \ge 0\ \forall x\in K\}$ is the dual
cone.
\end{definition}

For a quadratic form $q(x)=x^{\mathsf T}Qx$ the $\K$-Lorentzian condition is
equivalent to (\cite{GPlorentzian}):
\begin{enumerate}[(i)]
  \item $Q$ has exactly one positive eigenvalue, and
  \item $Q(\K)\subseteq \K^*$, i.e.\ $\K$ is acute with respect to $Q$.
\end{enumerate}
Thus, for every quadratic form obtained as a repeated directional derivative
\[
  q(x) = D_{a_1}\cdots D_{a_{d-2}} f(x)
\]
with $a_1,\dots,a_{d-2}\in\operatorname{int}K$, the associated matrix $Q$
satisfies $y^{\mathsf T}Qx\ge 0$ for all $x,y\in \K$.

A matrix $Q$ is (strictly) $\K$-copositive if $x^{t}Qx (>0) \geq 0$ for all $x \in \K$.
\begin{corollary} \label{cor:copositive}
If a quadratic form $q(x)=x^{t}Qx\in \R[x]_n^2$ is (strictly) $\K$-Lorentzian, then $Q$ is (strictly) $\K$-copositive matrix.
\end{corollary}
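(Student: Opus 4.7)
The plan is to leverage the characterization of quadratic $\K$-Lorentzian forms recalled immediately before the statement, together with the trivial specialization $y=x$ of the bilinear acuteness inequality. First I would note that, by the equivalence stated just above the corollary (and cited from \cite{GPlorentzian}), the $\K$-Lorentzian condition on $q(x)=x^{\mathsf T}Qx$ is equivalent to (i) $Q$ has exactly one positive eigenvalue, together with (ii) $Q(\K)\subseteq\K^*$, i.e.\ $y^{\mathsf T}Qx\ge 0$ for all $x,y\in\K$. Condition (ii) is exactly the acuteness of $\K$ with respect to $Q$ in the sense of \Cref{def:acute-Q}.

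Given this, $\K$-copositivity is obtained by simply setting $y=x$ in the acuteness inequality: for every $x\in\K$ one has $x^{\mathsf T}Qx = y^{\mathsf T}Qx\big|_{y=x}\ge 0$, which is precisely the definition of $Q$ being $\K$-copositive. For the strict version I would invoke the strict analogue of \Cref{def:pls} applied to the quadratic form $q$ itself (the case $d-2=0$, so no directional derivatives are taken): strict $\K$-Lorentzianity upgrades (ii) to $y^{\mathsf T}Qx>0$ for all nonzero $x,y\in\K$, and specializing $y=x\neq 0$ yields $x^{\mathsf T}Qx>0$, i.e.\ strict $\K$-copositivity of $Q$.

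There is essentially no obstacle here; the only subtlety is the bookkeeping for the strict case, namely making sure that the strict $\K$-Lorentzian hypothesis really does give the strict acuteness inequality $y^{\mathsf T}Qx>0$ on $\K\setminus\{0\}$ rather than only on $\inter\K$. This follows either from the definition of strict $\K$-CLC (\Cref{def:clc}) applied at $m=0$ for degree two, or by a density and continuity argument using the fact that $\inter\K$ is dense in $\K$ together with strict positivity on the interior, combined with the open condition $y^{\mathsf T}Qx>0$. Once this is in place, the corollary follows in a single line from the $y\mapsto x$ substitution.
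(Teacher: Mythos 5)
Your proposal matches the paper's (implicit) argument: the corollary is presented as an immediate consequence of the quadratic characterization of $\K$-Lorentzian forms recorded just above it—namely $Q(\K)\subseteq\K^{*}$, i.e.\ $y^{\mathsf T}Qx\ge 0$ for all $x,y\in\K$—followed by the specialization $y=x$, with the strict case handled in the same way from the strict hypothesis. One small caution: your fallback density-and-continuity argument only yields $x^{\mathsf T}Qx\ge 0$ on $\partial\K$, not strict positivity there, so the strict case should rest on the strict $\K$-Lorentzian/CLC hypothesis itself, as in your primary route.
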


\begin{remark}\label{rem:acute-distinction}
It is important to note that these two notions of acuteness are a priori
different. By definition of $\K$-Lorentzian quadratics, $\K$ is always acute with
respect to the constant matrices $Q$ arising from quadratic directional
derivatives of $f$. In contrast, the matrices $M_f(x)$ depend on the point
$x\in \K$, and the $\K$-Lorentzian property of $f$ only guarantees
$M_f(x)\succeq 0$, which implies the diagonal Rayleigh inequality
$R_u f(x)\ge 0$ for all $u$, but does \emph{not} automatically imply
$v^{\mathsf T}M_f(x)\,w\ge 0$ for all $v,w\in \K$. Thus nonnegativity of the
two-direction Rayleigh differences $R_{v,w} f(x)$ on $\K$ requires, in addition
to $\K$-Lorentzianity, the extra geometric condition that $\K$ be acute with
respect to $M_f(x)$ for all $x\in\operatorname{int}\K$.
\end{remark}

\begin{definition}[{$\K$-Rayleigh measure}] \label{def:rayleigh_measure}
Let $\K\subset\R^m$ be a convex cone. A probability measure $\mu$ on
$2^{[m]}$ is called \emph{$\K$-Rayleigh} if its partition function $Z$ satisfies
\[
  \Delta_{ij} Z(w) \;\ge 0
  \quad\text{for all }w\in \K\text{ and all }i,j\text{ with }e_i,e_j\in \K.
\]
Equivalently, for every $w\in\operatorname{int} \K$ and such $i,j$,
the tilted measure $\mu_w$ satisfies
\begin{equation*}
  \PP_{\mu_w}(i,j\in S)
  \;\le\;
  \PP_{\mu_w}(i\in S)\PP_{\mu_w}(j\in S).
\end{equation*}
\end{definition}
From a probabilistic viewpoint, the Rayleigh matrix
\[
  M_f(x)
  \coloneqq \nabla f(x)\,\nabla f(x)^{\mathsf T} - f(x)\,H_f(x)
\]
is a local negative covariance/curvature operator for the Gibbs measure
$\mu_x(\alpha)\propto c_\alpha x^\alpha$ associated with
$f(x)=\sum_\alpha c_\alpha x^\alpha$ with nonnegative coefficients. Since
$\log f(x)$ is the log-partition function, our earlier identities yield
\[
  M_f(x) = -\,f(x)^2\,\nabla^2 \log f(x),
\]
so spectral lower bounds on $M_f(x)$ are equivalent to strong log-concavity of
$\mu_x$, which in turn implies Poincaré and log--Sobolev inequalities,
concentration, and fast mixing for natural Markov chains via the
Bakry-Émery $\Gamma_2$-calculus and related methods
\cite{BrascampLieb,BakryEmery,AnariEtAlLC3}. In contrast, the classical scalar
Rayleigh differences
\[
  \Delta_{ij} f(x)
  = \frac{\partial f}{\partial x_i}(x)\,\frac{\partial f}{\partial x_j}(x)
    - f(x)\,\frac{\partial^2 f}{\partial x_i \partial x_j}(x)
\]
control coordinatewise negative dependence and underlie strongly Rayleigh and
negatively associated measures \cite{BorceaBrandenLiggett}. The matrix
$M_f(x)$ is a matrix-valued refinement: for any directions $v,w$,
\[
  R_{v,w} f(x)
  = D_v f(x)\,D_w f(x) - f(x)\,D_v D_w f(x)
  = v^{\mathsf T}M_f(x)\,w,
\]
so while $\Delta_{ij} f$ probes pairwise dependence along coordinates, the full
Rayleigh matrix encodes the second-order geometry of $\log f$ and is
better suited for spectral and mixing questions in log-concave sampling and
spectral independence
\cite{AnariSpectralIndependence,Cynthialog,WainwrightJordan}.

For completeness we record the standard calculus identities relating derivatives
of the log-partition function $\log f(x)$ to the moments of the associated
Gibbs measure; see, for example,
\cite{WainwrightJordan,BorceaBrandenLiggett,Cynthialog}. Let
\[
  f(x) \;=\; \sum_{\alpha \in \mathbb{N}^n} c_\alpha\,x^\alpha,
  \qquad c_\alpha \ge 0,\quad x \in \mathbb{R}^n_{>0},
\]
and define the Gibbs measure
\[
  \mu_x(\alpha)
  \;\coloneqq\;
  \frac{c_\alpha\,x^\alpha}{f(x)}.
\]
For each coordinate $i$, we have
\[
  \frac{\partial f}{\partial x_i}(x)
  \;=\;
  \sum_\alpha c_\alpha\,\alpha_i\,x^{\alpha-e_i},
  \qquad
  x_i\,\frac{\partial f}{\partial x_i}(x)
  \;=\;
  \sum_\alpha c_\alpha\,\alpha_i\,x^{\alpha},
\]
and hence
\[
  \frac{\partial}{\partial x_i} \log f(x)
  \;=\;
  \frac{1}{f(x)}\,\frac{\partial f}{\partial x_i}(x)
  \;=\;
  \frac{1}{x_i}\,\sum_\alpha \mu_x(\alpha)\,\alpha_i
  \;=\;
  \frac{\mathbb{E}_{\mu_x}[\alpha_i]}{x_i}.
\]
Thus the gradient of $\log f$ encodes the (scaled) mean parameters of $\mu_x$.
Differentiating once more, we obtain
\[
  \frac{\partial^2}{\partial x_i\,\partial x_j} \log f(x)
  \;=\;
  \frac{1}{f(x)}\,\frac{\partial^2 f}{\partial x_i\,\partial x_j}(x)
  \;-\;
  \frac{1}{f(x)^2}\,
  \frac{\partial f}{\partial x_i}(x)\,
  \frac{\partial f}{\partial x_j}(x).
\]
A direct computation shows
\[
  x_i x_j\,\frac{\partial^2 f}{\partial x_i\,\partial x_j}(x)
  \;=\;
  \sum_\alpha c_\alpha\,\alpha_i(\alpha_j-\delta_{ij})\,x^\alpha,
\]
so
\[
  \frac{1}{f(x)}\,\frac{\partial^2 f}{\partial x_i\,\partial x_j}(x)
  \;=\;
  \frac{1}{x_i x_j}\,
  \mathbb{E}_{\mu_x}\!\bigl[\alpha_i(\alpha_j-\delta_{ij})\bigr],
\]
and, using the formula for $\partial f/\partial x_i$ above,
\[
  \frac{1}{f(x)^2}\,
  \frac{\partial f}{\partial x_i}(x)\,
  \frac{\partial f}{\partial x_j}(x)
  \;=\;
  \frac{1}{x_i x_j}\,
  \mathbb{E}_{\mu_x}[\alpha_i]\,
  \mathbb{E}_{\mu_x}[\alpha_j].
\]
Hence, for $i \neq j$,
\[
  \frac{\partial^2}{\partial x_i\,\partial x_j} \log f(x)
  \;=\;
  \frac{1}{x_i x_j}\,
  \bigl(
    \mathbb{E}_{\mu_x}[\alpha_i \alpha_j]
    - \mathbb{E}_{\mu_x}[\alpha_i]\,
      \mathbb{E}_{\mu_x}[\alpha_j]
  \bigr)
  \;=\;
  \frac{1}{x_i x_j}\,
  \operatorname{Cov}_{\mu_x}(\alpha_i,\alpha_j),
\]
and for $i=j$ one gets a similar expression in terms of
$\operatorname{Var}_{\mu_x}(\alpha_i)$ and $\mathbb{E}_{\mu_x}[\alpha_i]$. In
particular, in logarithmic coordinates $\theta_i = \log x_i$ one has
(cf.\ the standard exponential-family identities \cite{WainwrightJordan})
\[
  \frac{\partial}{\partial \theta_i} \log f(e^\theta)
  \;=\;
  \mathbb{E}_{\mu_{e^\theta}}[\alpha_i],
  \qquad
  \frac{\partial^2}{\partial \theta_i\,\partial \theta_j} \log f(e^\theta)
  \;=\;
  \operatorname{Cov}_{\mu_{e^\theta}}(\alpha_i,\alpha_j),
\]
so the Hessian of the log-partition function in $\theta$-coordinates is exactly
the covariance matrix of $\mu_{e^\theta}$. Combining this with
$M_f(x) = -f(x)^2 \nabla^2\log f(x)$ shows that $M_f(x)$ is, up to the positive
factor $f(x)^2$ and the change of variables $x_i = e^{\theta_i}$, a negative
covariance operator for the family of Gibbs measures
$(\mu_x)_{x\in\mathbb{R}^n_{>0}}$.

\section{Semipositive cones and hyperbolic barriers in conic optimization}
\label{sec:semipositive}
In this section we relate nonsingular matrices and their semipositive cones to hyperbolic generating polynomials that serve as natural barrier functions for conic optimization problems.
\subsection{Hyperbolic generating polynomials}
We first explain how to construct hyperbolic generating polynomials from a
given nonsingular matrix $A\in\R^{n\times n}$.

\begin{proposition}\label{prop:hyp}
If $A=(a_{ij})$ is nonsingular, then the generating polynomial
\[
  f_A(x)
  \;=\;
  \det\Bigl(\sum_{j=1}^n x_j D_j\Bigr),
  \qquad
  D_j = \diag(a_{1j},\dots,a_{nj}),
\]
is hyperbolic with respect to some direction $e\in\R^n$.
\end{proposition}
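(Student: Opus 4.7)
The plan is to reduce $f_A$ to a product of linear forms and then pick a direction $e$ making all those forms nonvanishing; hyperbolicity then follows essentially for free.

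First I would observe that each $D_j = \diag(a_{1j},\dots,a_{nj})$ is diagonal, and the diagonal matrices form a commutative algebra closed under $\R$-linear combinations. Hence
\[
  \sum_{j=1}^n x_j D_j
  \;=\;
  \diag\!\Bigl(\sum_{j=1}^n a_{1j}x_j,\;\dots,\;\sum_{j=1}^n a_{nj}x_j\Bigr)
  \;=\;
  \diag(Ax),
\]
and therefore
\[
  f_A(x) \;=\; \prod_{i=1}^n (Ax)_i \;=\; \prod_{i=1}^n \ell_i(x),
\]
where $\ell_i(x) = \sum_j a_{ij}x_j$ is the $i$-th row form of $A$. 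Thus $f_A$ is a product of $n$ linear forms, and nonsingularity of $A$ guarantees that these forms are linearly independent, so $f_A$ is a nonzero homogeneous polynomial of degree $n$.

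Next I would choose a hyperbolicity direction. Since $A$ is invertible, set $e := A^{-1}\mathbf{1}$, so $Ae = \mathbf{1}$ and in particular $\ell_i(e) = 1 \neq 0$ for every $i$; consequently $f_A(e) = 1 \neq 0$. To verify hyperbolicity, fix any $x\in\R^n$ and consider the univariate restriction
\[
  t \;\longmapsto\; f_A(x + te)
  \;=\;
  \prod_{i=1}^n \bigl(\ell_i(x) + t\,\ell_i(e)\bigr)
  \;=\;
  \prod_{i=1}^n \bigl(t + \ell_i(x)\bigr).
\]
This polynomial factors completely over $\R$ with (real) roots $t_i = -\ell_i(x) = -(Ax)_i$, so $f_A$ is hyperbolic with respect to $e$ by definition.

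There is really no obstacle here: the statement is essentially a structural observation that the $D_j$ are simultaneously diagonal, which collapses the determinant into a product of linear forms. The only mild point worth flagging is the choice of $e$; any $e$ with $Ae$ having no zero coordinate works, and $e = A^{-1}\mathbf{1}$ is the cleanest. One could equivalently take $e$ in the open dense set $\{x : \prod_i (Ax)_i \neq 0\}$, which is nonempty since $A$ is nonsingular. This also explains why the nonsingularity hypothesis on $A$ is used only to guarantee that such an $e$ exists and that $f_A\not\equiv 0$; without it, some row $\ell_i$ could vanish identically and degenerate the construction.
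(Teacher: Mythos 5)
Your proof is correct. It shares the paper's key structural step (the $D_j$ are simultaneously diagonal, so $\sum_j x_j D_j = \diag\bigl((Ax)_1,\dots,(Ax)_n\bigr)$) and the same choice of direction $e=A^{-1}\mathbf{1}$, but it finishes differently: you factor $f_A(x)=\prod_i (Ax)_i$ into real linear forms and verify hyperbolicity directly by exhibiting the real roots $t_i=-(Ax)_i$ of $t\mapsto f_A(x+te)$, whereas the paper instead invokes the general result (cited as \cite{Pablolax}) that a determinantal polynomial $\det\bigl(\sum_j x_j M_j\bigr)$ is hyperbolic whenever the span of the $M_j$ contains a positive definite matrix, and chooses $e$ so that $\sum_j e_j D_j = I$, i.e.\ $Ae=\mathbf{1}$. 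Your route is more elementary and self-contained, and in fact proves a bit more: $f_A$ is hyperbolic with respect to \emph{every} $e$ with $(Ae)_i\neq 0$ for all $i$, and the factorization makes transparent that the hyperbolicity cone is the polyhedral cone $\{x: Ax\ge 0\}$ (after replacing $e$ by $-e$ if needed so that $f_A(e)>0$ and the roots lie on the correct side), which is exactly what the paper establishes separately in \Cref{prop:simhypcone}. The paper's route, by phrasing everything through the determinantal pencil $\sum_j x_j D_j$ and a positive definite point in its span, keeps the argument aligned with the spectrahedral description $\Lambda_+(f_A,e)=\{x:\sum_j x_j D_j\succeq 0\}$ used later, but relies on the external citation. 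Either argument is complete; your only implicit convention to flag is that the definition of hyperbolicity requires $f_A(e)\neq 0$ (or $>0$), which your choice $Ae=\mathbf{1}$ handles since $f_A(e)=1$.
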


\begin{proof}
We have
\[
  \sum_{j=1}^n x_j D_j
  =
  \diag\Bigl(\sum_{j=1}^n a_{1j}x_j,\dots,\sum_{j=1}^n a_{nj}x_j\Bigr),
\]
so $f_A$ is a determinantal polynomial in the diagonal matrices $D_j$. By
\cite{Pablolax}, such a determinantal polynomial is hyperbolic if the linear
span of the $D_j$ contains a positive definite matrix. Without loss of
generality we may take this matrix to be the identity $I$, i.e.\ we seek
$e=(e_1,\dots,e_n)\in\R^n$ such that
\[
  \sum_{j=1}^n e_j D_j = I.
\]
This is equivalent to $Ae=\mathbf{1}$, where $\mathbf{1}$ is the all-ones
vector. Since $A$ is nonsingular, the system $Ae=\mathbf{1}$ has a unique
solution $e\in\R^n$, and $\sum_j e_j D_j = I\succ0$, so $f_A$ is hyperbolic
with respect to this $e$.
\end{proof}

\noindent\textbf{Special cases.}
\begin{enumerate}
  \item If $A$ is nonsingular with nonnegative entries, then $f_A$ is stable
  (each diagonal entry $\sum_j a_{ij}x_j$ has nonnegative coefficients).
  Doubly stochastic matrices are a basic example.
  \item If $A$ is symmetric positive definite, then $f_A$ is hyperbolic and
  the monomial $x_1\cdots x_n$ appears with positive coefficient.
\end{enumerate}

\begin{example}\label{example:1}
Consider
\[
  A =
  \begin{bmatrix}
    -1 & 1 & 1 & 1 \\
    1 & -1 & 1 & 1 \\
    1 & 1 & -1 & 1 \\
    1 & 1 & 1 & -1
  \end{bmatrix}.
\]
The generating polynomial
\[
  f_A(x)
  = \det\Bigl(\sum_{j=1}^4 x_j D_j\Bigr),
  \qquad
  D_j = \diag(a_{1j},\dots,a_{4j}),
\]
is
\[
  f_A(x)
  = -\sum_{i=1}^4 x_i^4
    + 2\sum_{1\le i<j\le 4} x_i^2 x_j^2
    + 8 x_1 x_2 x_3 x_4.
\]
A direct computation shows that the Hessian at $\mathbf{1}=(1,1,1,1)$ is
\[
  H_{f_A}(\mathbf{1})
  = 16
  \begin{bmatrix}
    0 & 1 & 1 & 1 \\
    1 & 0 & 1 & 1 \\
    1 & 1 & 0 & 1 \\
    1 & 1 & 1 & 0
  \end{bmatrix}
  =: 16 B,
\]
where $B$ is the adjacency matrix of the complete graph $K_4$. This is not a
coincidence: $f_A$ is invariant under all permutations of the coordinates, so
$H_{f_A}(\mathbf{1})$ must be invariant under the action of the symmetric
group $S_4$, hence of the form $\alpha I + \beta(J-I)$ with $J$ the all-ones
matrix. Since the diagonal entries vanish, $\alpha=0$ and
$H_{f_A}(\mathbf{1})$ is a constant multiple of $J-I$, i.e.\ of the adjacency
matrix of $K_4$. The eigenvalues of $B$ are $3,-1,-1,-1$, so
$H_{f_A}(\mathbf{1})$ has exactly one positive eigenvalue, and $f_A$ is
$\K$-Lorentzian for some cone $\K$ containing the ray through $\mathbf{1}$.
By \Cref{prop:hyp}, $f_A$ is hyperbolic with respect to a suitable direction
$e$.
\end{example}

\begin{example}[Motivation from conic optimization]\label{ex:practical-semipositive}
A basic setting where hyperbolic generating polynomials arise is the conic
optimization problem
\[
  \min\{\, c^{\mathsf T}x : x \in \R^n_{\ge0},\ Ax \ge 0 \,\}.
\]
Here $x$ collects nonnegative activity levels and $Ax\ge0$ encodes linear
balance or safety constraints at $n$ subsystems. The feasible region is the
\emph{semipositive cone}
\[
  K_A = \{x\in\R^n_{\ge0} : Ax\ge0\}.
\]
If $A$ is nonsingular, then by \Cref{prop:hyp} the generating polynomial
$f_A(x) = \det(\sum_{j=1}^n x_j D_j)$ is hyperbolic, and
\[
  \Lambda_+(f_A,e)
  = \{x\in\R^n : Ax\ge0\},
  \qquad
  K_A = \Lambda_+(f_A,e)\cap\R^n_{\ge0}.
\]
Thus $K_A$ is precisely the part of the hyperbolicity cone that lies in the
positive orthant, and $f_A$ provides a natural hyperbolic barrier on
$\operatorname{int}K_A$. Characterizing when $\Lambda_+(f_A,e)$ intersects
$\R^n_{>0}$ (equivalently, when $A$ is semipositive) tells us exactly when we
have a strictly feasible region with a hyperbolic barrier compatible with the
ambient modeling cone $\R^n_{\ge0}$. This motivates our study of semipositive
cones and their $\K$-Lorentzian generating polynomials.
\end{example}
The previous examples suggest that the intersection
$\Lambda_+(f_A,e)\cap\R^n_{\ge0}$ plays a distinguished role. We now formalize
this by introducing semipositive cones and their generalization to arbitrary
proper convex cones. We show that when $A$ is nonsingular and
semipositive, the hyperbolicity cone of its generating polynomial meets
$\R^n_{\ge0}$ in the \emph{semipositive cone} of $A$.

A matrix $A\in\R^{n\times n}$ is called \emph{semipositive} if there exists
$x>0$ such that $Ax>0$. The associated semipositive cone is
\[
  \operatorname{int}K_A
  \;:=\;
  \{x\in\R^n_{>0} : Ax>0\},
  \qquad
  K_A
  \;:=\;
  \{x\in\R^n_{\ge0} : Ax\ge0\}.
\]
It is known that if $A$ is semipositive, then $K_A$ is a proper convex cone in
$\R^n$; moreover, $K_A$ is polyhedral and can be written as the intersection
of the nonnegative orthant with the cone generated by the columns of $A$, see
\cite{geometricmapping}.

\begin{proposition}\label{prop:simhypcone}
Let $A\in\R^{n\times n}$ be nonsingular and semipositive, and let $f_A$ be its
generating polynomial (as in \Cref{prop:hyp}). Then
\[
  \Lambda_+(f_A,e)\cap\R^n_{\ge0} = K_A,
\]
i.e.\ the intersection of the hyperbolicity cone of $f_A$ with the nonnegative
orthant is exactly the semipositive cone $K_A$.
\end{proposition}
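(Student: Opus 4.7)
The plan is to exploit the very explicit factored form of $f_A$. Writing $\sum_{j=1}^n x_j D_j = \diag(Ax)$, the determinant immediately factors as
\[
  f_A(x) \;=\; \prod_{i=1}^n (Ax)_i,
\]
so $f_A$ is a product of $n$ linear forms, each equal to a row of $A$ applied to $x$. This structural fact is what drives the whole argument.

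The first step is to use the choice of $e$ from \Cref{prop:hyp}: recall $Ae = \mathbf{1}$. Then for any $x\in\R^n$ and $t\in\R$,
\[
  f_A(x + te) \;=\; \prod_{i=1}^n \bigl((Ax)_i + t(Ae)_i\bigr) \;=\; \prod_{i=1}^n \bigl((Ax)_i + t\bigr).
\]
Thus the univariate restriction $t\mapsto f_A(x+te)$ is a real-rooted polynomial (confirming hyperbolicity with respect to $e$) whose roots are precisely $t_i = -(Ax)_i$ for $i=1,\dots,n$.

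The second step applies the definition of $\Lambda_+(f_A,e)$ recalled in the excerpt: $x\in\Lambda_+(f_A,e)$ if and only if every root of $t\mapsto f_A(x+te)$ is nonpositive. Since the roots are $-(Ax)_i$, this is equivalent to $(Ax)_i \ge 0$ for all $i$, i.e.\ $Ax\ge 0$. Hence
\[
  \Lambda_+(f_A,e) \;=\; \{x \in \R^n : Ax \ge 0\}.
\]
Intersecting both sides with $\R^n_{\ge 0}$ yields exactly $K_A = \{x\in\R^n_{\ge 0} : Ax\ge 0\}$, which is the desired equality.

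There is really no significant obstacle in this proof: the only subtlety is making sure the choice of hyperbolic direction $e$ is the one guaranteed by \Cref{prop:hyp} (namely the unique solution of $Ae=\mathbf{1}$), since it is precisely this choice that turns $f_A(x+te)$ into the clean product $\prod_i((Ax)_i + t)$ and makes the roots readable off as $-(Ax)_i$. One small housekeeping remark is that nonsingularity of $A$ is used only to guarantee that such an $e$ exists (and to ensure $f_A\not\equiv 0$); semipositivity of $A$ guarantees that $K_A$ has nonempty interior, consistent with its description as the intersection of the hyperbolicity cone with the orthant, but is not strictly needed for the set-theoretic equality itself.
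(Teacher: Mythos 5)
Your proposal is correct and follows essentially the same route as the paper: both arguments hinge on the identity $\sum_{j} x_j D_j = \diag(Ax)$, conclude that $\Lambda_+(f_A,e)=\{x\in\R^n : Ax\ge 0\}$, and then intersect with $\R^n_{\ge 0}$ to get $K_A$. The only difference is cosmetic: you verify the hyperbolicity-cone description explicitly by factoring $f_A(x+te)=\prod_i\bigl((Ax)_i+t\bigr)$ using $Ae=\mathbf{1}$ and reading off the roots, whereas the paper cites the standard spectrahedral description $\Lambda_+(f_A,e)=\{x:\sum_j x_j D_j\succeq 0\}$ for determinantal polynomials; your closing observation that semipositivity is not needed for the set-theoretic equality (only for nonemptiness of the interior) is also accurate.
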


\begin{proof}
By \Cref{prop:hyp} the generating polynomial $f_A$ is hyperbolic with respect
to some $e$. Moreover,
\[
  f_A(x) = \det\Bigl(\sum_{j=1}^n x_j D_j\Bigr),
\]
where each $D_j$ is the diagonal matrix whose diagonal entries are the entries
of the $j$th column of $A$. Hence $f_A$ is a determinantal polynomial and its
hyperbolicity cone is
\[
  \Lambda_+(f_A,e)
  = \Bigl\{x\in\R^n : \sum_{j=1}^n x_j D_j \succeq 0\Bigr\}
  = \{x\in\R^n : Ax\ge0\},
\]
since $\sum_j x_j D_j$ is the diagonal matrix with diagonal entries $(Ax)_1,
\dots,(Ax)_n$. Therefore
\[
  \Lambda_+(f_A,e)\cap\R^n_{\ge0}
  = \{x\in\R^n_{\ge0} : Ax\ge0\}
  = K_A,
\]
a proper polyhedral cone. \qedhere
\end{proof}

\begin{corollary}\label{cor:simhypcone}
Let $A$ be a nonsingular matrix. Then $f_A$ is a $\K$-Lorentzian (and in
particular hyperbolic) polynomial with respect to
$\K=\Lambda_+(f_A,e)$, a simplicial cone.
\end{corollary}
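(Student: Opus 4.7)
The plan is to exploit an explicit factorization of $f_A$ together with the classical principle that any hyperbolic polynomial is Lorentzian on its own hyperbolicity cone. Since each $D_j=\diag(a_{1j},\dots,a_{nj})$ is diagonal, the matrix $\sum_{j=1}^n x_j D_j$ is itself diagonal with $i$-th entry $(Ax)_i$, giving
\[
  f_A(x) \;=\; \prod_{i=1}^n (Ax)_i,
\]
a product of $n$ linear forms. By \Cref{prop:hyp}, $f_A$ is hyperbolic with respect to the unique $e$ solving $Ae=\1$, and by the standard description of the hyperbolicity cone of a product of linear forms this cone is precisely
\[
  \Lambda_+(f_A,e) \;=\; \{x\in\R^n : Ax\ge 0\} \;=\; A^{-1}\bigl(\R^n_{\ge 0}\bigr).
\]

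For simpliciality I would argue that nonsingularity of $A$ makes $A^{-1}$ a linear isomorphism, so $A^{-1}(\R^n_{\ge 0})$ is the image of the standard simplicial cone $\R^n_{\ge 0}$ under an invertible linear map; it is therefore itself simplicial, with extremal rays given by the columns of $A^{-1}$ (equivalently, the vectors dual to the rows of $A$).

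For the $\K$-Lorentzian claim with $\K=\Lambda_+(f_A,e)$ I would invoke the Gårding–Renegar theory: directional derivatives of a hyperbolic polynomial along any interior direction of its hyperbolicity cone remain hyperbolic with the same hyperbolicity cone, so for any $a_1,\dots,a_{d-2}\in\inter\K$ the quadratic form $q=D_{a_1}\cdots D_{a_{d-2}} f_A$ is hyperbolic with hyperbolicity cone containing $\K$. A degree-two hyperbolic polynomial then has Lorentzian signature (its symmetric matrix $Q$ has exactly one positive eigenvalue, as seen from the discriminant of $q(x+te)$), and Gårding's reverse Cauchy–Schwarz inequality $(y^{\mathsf T}Qx)^2\ge q(y)q(x)>0$ for $x,y\in\inter\K$ combined with continuity and the sign $e^{\mathsf T}Qe=q(e)>0$ yields $y^{\mathsf T}Qx>0$ throughout $\inter\K$. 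These are exactly the two conditions of \Cref{def:pls}, so $f_A$ is $\K$-Lorentzian. I expect no substantial obstacle: once the factorization $f_A=\prod_i(Ax)_i$ is in hand, both simpliciality and $\K$-Lorentzianity reduce to standard facts from hyperbolic polynomial theory, and the only bookkeeping is matching the Gårding inequality and Lorentzian signature of quadratic hyperbolic forms to \Cref{def:pls}.
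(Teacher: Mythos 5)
Your proposal is correct and follows essentially the same route the paper intends: the diagonal factorization $f_A=\prod_i (Ax)_i$, hyperbolicity from \Cref{prop:hyp}, identification of $\Lambda_+(f_A,e)=\{x:Ax\ge 0\}=A^{-1}(\R^n_{\ge 0})$ (hence simplicial by nonsingularity of $A$), and the standard G\r{a}rding--Renegar fact that a hyperbolic polynomial is Lorentzian on its own hyperbolicity cone. The only difference is that you spell out the last step (one positive eigenvalue of the quadratic derivatives plus the reverse Cauchy--Schwarz inequality giving $y^{\mathsf T}Qx>0$ on the interior), whereas the paper simply invokes this as known; your bookkeeping matches \Cref{def:pls} correctly.
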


\begin{example}
Consider the matrix $A$ from \Cref{example:1}. Then the corresponding
semipositive cone $K_A=\{x\in\R^n_{\ge0}:Ax\ge0\}$ is depicted in
\Cref{fig:semipositivecone}.

\begin{figure}[htbp]
  \begin{center}
    \includegraphics[width=5cm]{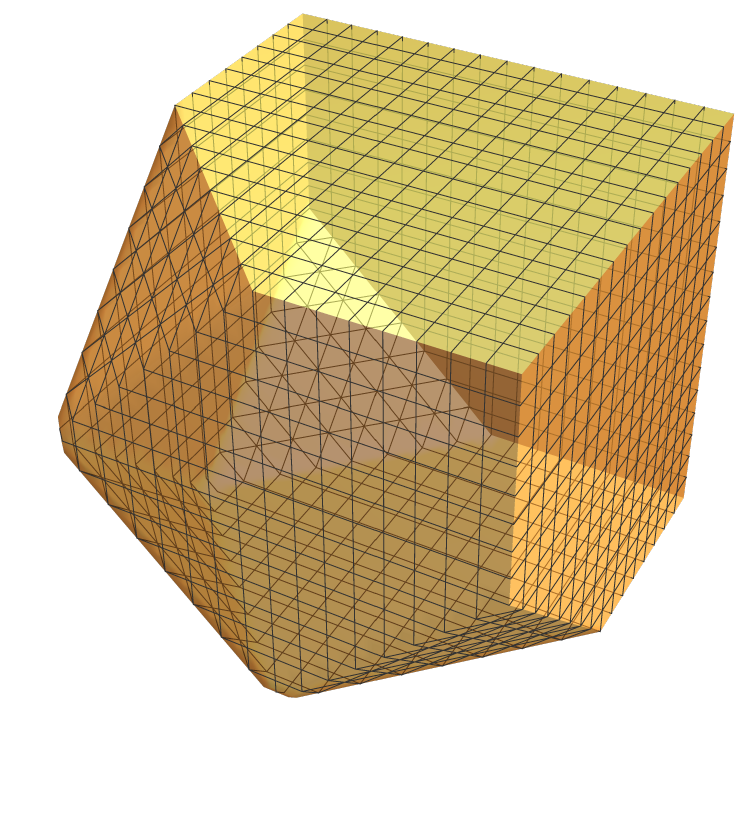}
    \caption{Compact base of $\K_A=\Lambda_{+}(f_A,1)\cap\R^4_{\ge0}$ at $x_4=1$.}
    \label{fig:semipositivecone}
  \end{center}
\end{figure}
\end{example}

\subsection*{Generalization: \texorpdfstring{$\K$}{K}--semipositive cones}

We now generalize semipositive matrices and semipositive cones from the
nonnegative orthant to an arbitrary proper convex cone $\K$.

Let $\K\subset\R^n$ be a proper convex cone. A matrix $A\in\R^{n\times n}$ is
called \emph{$\K$-semipositive} if there exists $x\in\operatorname{int}\K$
such that $Ax\in\operatorname{int}\K$, i.e.
\[
  A(\operatorname{int}\K)\cap\operatorname{int}\K \neq \emptyset.
\]
We say $A$ is \emph{$\K$-nonnegative} if $A(\K)\subseteq\K$, and
\emph{$\K$-irreducible} in the usual sense of cone theory.

\begin{theorem}\label{them:simcone}
\cite[Chap.~5, Th.~5.1]{Bermannonnegative}
Let $\K$ be a proper convex cone.
\begin{enumerate}[(a)]
\item If $A$ is $\K$-irreducible and $A(\K) = \K$, then $A^{-1}$ is also
$\K$-irreducible and $A(\partial \K) = A^{-1}(\partial \K) = \partial \K$.
\item Let $A$ be $\K$-nonnegative and $\K$-semipositive. Then $A(\K) = \K$.
\end{enumerate}
\end{theorem}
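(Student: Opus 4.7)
The plan is to treat both statements as classical results in the Perron-Frobenius-Krein-Rutman theory of $\K$-preserving linear maps, following the standard approach in the cited reference.

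For part (a), I would first deduce that $A$ is nonsingular from $\K$-irreducibility: the set $\ker A \cap \K$ is an $A$-invariant face (since $A$ sends it to $\{0\}$), so irreducibility forces it to be trivial, and in turn $\ker A = \{0\}$ because $\K$ has nonempty interior and hence spans $\R^n$. Combined with the surjectivity $A(\K) = \K$, this makes $A|_{\K}$ a continuous bijection of $\K$ onto itself, hence a homeomorphism. By invariance of domain, this homeomorphism must preserve the stratification into interior and boundary, so $A(\inter \K) = \inter \K$ and $A(\partial \K) = \partial \K$; applying the same argument to the continuous linear inverse $A^{-1}$ yields $A^{-1}(\partial \K) = \partial \K$. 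To establish $\K$-irreducibility of $A^{-1}$, I would use a face-chain argument: any proper face $F \subsetneq \K$ with $A^{-1}(F) \subseteq F$ gives an ascending chain $F \subseteq A(F) \subseteq A^2(F) \subseteq \cdots$ in the finite lattice of faces of $\K$; this chain must stabilize at $A^k(F) = A^{k+1}(F)$ for some $k$, whence bijectivity of $A|_{\K}$ gives $A(F) = F$, contradicting the $\K$-irreducibility of $A$.

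For part (b), the plan is more delicate and relies on the Krein-Rutman spectral theory. Since $A$ is $\K$-nonnegative, it has a spectral radius eigenvector $u \in \K \setminus \{0\}$, and the $\K$-semipositivity hypothesis provides some $x_0 \in \inter \K$ with $A x_0 \in \inter \K$. The strategy is to first establish that $A$ is nonsingular, by leveraging that $A x_0 \in \inter \K$ together with continuity to rule out $\ker A \cap \inter \K \neq \emptyset$, and then to argue that $A(\K)$ is a closed convex subcone of $\K$ with nonempty interior (since $A x_0 \in \inter \K \cap A(\K)$). To rule out $A(\K) \subsetneq \K$, I would inspect extreme rays: any extreme ray of $\K$ missing from $A(\K)$ would have to generate a proper $A$-invariant face of $\K$ (via a dual/polar argument using the Perron eigenvector $u$), forcing a structural obstruction to the existence of the interior-to-interior pair $(x_0, A x_0)$.

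The main obstacle I anticipate is part (b): going from a single interior-to-interior instance to global surjectivity requires genuinely quantitative spectral input, and the precise route through the Perron-Frobenius-Krein-Rutman apparatus is exactly what the Berman reference provides. Part (a), by contrast, is essentially a combinatorial-topological consequence of bijectivity once nonsingularity is extracted from irreducibility.
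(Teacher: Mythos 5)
First, a structural point: the paper does not prove this theorem at all — it is quoted from the reference \cite{Bermannonnegative} and then used as a black box — so there is no in-paper argument to compare against, and your proposal has to stand on its own. For part (a) your outline is essentially workable, but two of your justifications are wrong as stated. The inference ``$\ker A\cap\K$ is a trivial face, hence $\ker A=\{0\}$ because $\K$ spans $\R^n$'' is a non sequitur: a nontrivial kernel can perfectly well meet a proper cone only at the origin (a line in direction $(1,-1)$ meets $\R^2_{\ge0}$ only at $0$). Nonsingularity instead follows directly from the hypothesis $A(\K)=\K$, since $\K-\K=\R^n$ gives $A(\R^n)=A(\K)-A(\K)=\K-\K=\R^n$. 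Second, the face lattice of a general proper cone is \emph{not} finite (the Lorentz cone has a continuum of extreme rays), so ``the chain stabilizes in the finite lattice of faces'' is unjustified; the chain $F\subseteq A(F)\subseteq A^2(F)\subseteq\cdots$ does stabilize, but for a different reason: each $A^k(F)$ is again a face (this uses $A^{-1}(\K)=\K$, i.e.\ that $A$ is an automorphism of $\K$), and a strictly increasing chain of faces has strictly increasing dimension, bounded by $n$. With these repairs the stabilized face $G=A^k(F)$ satisfies $A(G)=G$ and is neither $\{0\}$ nor $\K$, contradicting $\K$-irreducibility of $A$; the boundary statements follow from openness of the linear isomorphism exactly as you say. (A shorter classical route: an invertible $\K$-nonnegative matrix and its inverse have the same eigenvectors, and irreducibility can be characterized by the absence of eigenvectors on $\partial\K$.)

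Part (b) is the genuine gap, and it cannot be closed along the lines you sketch, because with the definitions used in this paper the statement is false as transcribed. Take $\K=\R^2_{\ge0}$ and $A=\left(\begin{smallmatrix}1&1\\0&1\end{smallmatrix}\right)$, or even the all-ones matrix $A=\left(\begin{smallmatrix}1&1\\1&1\end{smallmatrix}\right)$: in both cases $A(\K)\subseteq\K$ and $A(1,1)^{\mathsf T}\in\inter\K$, so $A$ is $\K$-nonnegative and $\K$-semipositive, yet $A(\K)$ is the proper subcone $\operatorname{cone}\{(1,0),(1,1)\}$, respectively the single ray through $(1,1)$. The all-ones example is even $\K$-positive, hence $\K$-irreducible, so no additional Perron--Frobenius/Krein--Rutman input of the kind you invoke (spectral radius eigenvector, ``dual/polar argument,'' ``structural obstruction'') can rescue the implication: a single interior-to-interior instance simply does not force surjectivity of $A$ on $\K$. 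Your own write-up flags (b) as the step you would delegate to the reference, and indeed your sketch there is not a proof; but the deeper issue is that the quoted statement must carry hypotheses (or a different formulation) in \cite{Bermannonnegative} that the paper's transcription has dropped, and any honest proof attempt has to begin by correcting the statement rather than by supplying the missing spectral argument.
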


As a basic example, the matrix $J=\diag(1,-1,\dots,-1)$ is
$\L_n$--irreducible and satisfies $J(\L_n)=\L_n$, where $\L_n$ is the
$n$-dimensional second-order (Lorentz) cone. More generally, matrices that
preserve a cone and are $\K$-irreducible play an important role in
optimization when the feasible region is the intersection of two proper convex
cones.

\begin{remark}
If $A(\K)=\K$ and $A$ is nonsingular, then both $A$ and $A^{-1}$ are
$\K$-nonnegative.
\end{remark}

\begin{remark}
If $A$ is nonsingular and $\K$-irreducible with $A(\K)=\K$, then the
eigenvalues of $A$ all have the same modulus (Perron--Frobenius for
cone-preserving maps). The matrix $A$ in \Cref{example:1} satisfies these
conditions over its hyperbolicity cone.
\end{remark}

In \Cref{example:1}, the matrix $A$ is both $\K$-irreducible and
$\R^4_{\ge0}$--semipositive. Therefore $f_A$ is $K_A$--Lorentzian on the
semipositive cone
\[
  K_A
  = \{x\in\R^n_{\ge0}:Ax\ge0\}
  = \Lambda_+(f_A,e)\cap\R^n_{\ge0},
\]
a proper polyhedral cone. Combining \Cref{cor:simhypcone} and
\Cref{them:simcone} we obtain the following generalization.

\begin{theorem}\label{them:clcsemipositive}
Let $\tilde{\K}$ be a proper convex cone containing $e$, and let $A$ be
nonsingular and $\tilde{\K}$--semipositive. Then the generating polynomial
$f_A$ is $\K_A$--Lorentzian, where
\[
  K_A
  \;:=\;
  \Lambda_+(f_A,e)\cap\tilde{\K}
  \;=\;
  \{x\in\tilde{\K}:Ax\ge0\}
\]
is a $\tilde{\K}$--semipositive cone.
\end{theorem}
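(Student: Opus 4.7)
The plan is to reduce $K_A$-Lorentzianity of $f_A$ to the stronger fact that $f_A$ is Lorentzian on its full hyperbolicity cone $\Lambda_+(f_A,e)$, and then to exploit the inclusion $K_A \subseteq \Lambda_+(f_A,e)$ (an immediate extension of \Cref{prop:simhypcone} to the $\tilde{\K}$-setting) together with the monotonicity of the conditions in \Cref{def:pls} under passing to subcones with nonempty interior: if a form satisfies those conditions on the interior of a cone $\K$, it satisfies them on the interior of any proper subcone of $\K$.

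First I would dispatch the basic geometric bookkeeping. By the construction in \Cref{prop:hyp}, the direction $e$ satisfies $Ae = \mathbf{1} > 0$, so $e$ lies in the interior of $\Lambda_+(f_A,e) = \{x : Ax \ge 0\}$; combined with the $\tilde{\K}$-semipositivity hypothesis (which provides a point in $\inter \tilde{\K}$ whose image under $A$ is in $\inter \tilde{\K}$, and $e$ may be taken as a representative of such a direction up to scaling), this places $e \in \inter K_A$. Hence $K_A$ is a proper convex cone with nonempty interior. The determinantal identification of $\Lambda_+(f_A,e)$ used in \Cref{prop:simhypcone} is independent of the ambient cone and shows directly that the two descriptions $K_A = \Lambda_+(f_A,e) \cap \tilde{\K}$ and $\{x \in \tilde{\K} : Ax \ge 0\}$ coincide.

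The main step is to verify that $f_A$ is $\Lambda_+(f_A,e)$-Lorentzian. For any $a_1,\dots,a_{d-2} \in \inter \Lambda_+(f_A,e)$, by G{\aa}rding's theorem the quadratic $q = D_{a_1}\cdots D_{a_{d-2}} f_A$ is itself hyperbolic with respect to $e$, and its hyperbolicity cone contains $\Lambda_+(f_A,e)$. A hyperbolic quadratic with nonempty open hyperbolicity cone has matrix $Q$ of Lorentzian signature, i.e.\ exactly one positive eigenvalue, and G{\aa}rding's inequality yields $y^{\mathsf T} Q x > 0$ for all $x, y \in \inter \Lambda_+(f_A,e)$. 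Both defining conditions of \Cref{def:pls} are therefore satisfied on $\Lambda_+(f_A,e)$, so $f_A$ is $\Lambda_+(f_A,e)$-Lorentzian.

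Restriction to $K_A$ is then immediate: since $\inter K_A \subseteq \inter \Lambda_+(f_A,e)$, every admissible choice of $a_i, x, y \in \inter K_A$ is also admissible for the $\Lambda_+(f_A,e)$-Lorentzian conditions, and the inequalities transfer verbatim. The main obstacle I anticipate is not analytic but citational: the step ``hyperbolic polynomials are Lorentzian on their own hyperbolicity cones'' is folklore in hyperbolic polynomial theory (traceable through G{\aa}rding, \cite{Renegar}, and \cite{saundersonhyp}), but should be stated and cited carefully so that the proof remains self-contained, and the identification of $e$ as the common interior direction must be handled consistently with the convention in \Cref{prop:hyp} that $Ae=\mathbf{1}$.
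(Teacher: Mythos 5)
Your proposal is correct and follows essentially the same route as the paper: show that $f_A$ is Lorentzian on its full hyperbolicity cone $\Lambda_+(f_A,e)$ (the paper simply invokes \Cref{cor:simhypcone}, while you unpack the G{\aa}rding-type argument behind it) and then restrict the conditions of \Cref{def:pls} to the subcone $K_A=\Lambda_+(f_A,e)\cap\tilde{\K}$, whose nonempty interior is supplied by $\tilde{\K}$-semipositivity. The only cosmetic caveat is your claim that $e$ itself can be taken as the semipositive interior direction ``up to scaling'' --- the hypothesis only puts $e\in\tilde{\K}$, so one should argue with the semipositivity witness rather than with $e$, exactly as the paper does.
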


\begin{proof}
Since $A$ is nonsingular, \Cref{cor:simhypcone} implies that $f_A$ is
$\Lambda_+(f_A,e)$--Lorentzian for some $e\in\Lambda_+(f_A,e)$, and
\Cref{prop:simhypcone} gives
\[
  \Lambda_+(f_A,e)=\{x\in\R^n:Ax\ge0\}.
\]
By $\tilde{\K}$--semipositivity and $e\in\tilde{\K}$, the intersection
\[
  K_A
  = \Lambda_+(f_A,e)\cap\tilde{\K}
  = \{x\in\tilde{\K}:Ax\ge0\}
\]
is a nonempty $\tilde{\K}$--semipositive cone. Since $K_A\subseteq
\Lambda_+(f_A,e)$, restricting all directional derivatives to $K_A$ shows that
$f_A$ is $K_A$--Lorentzian.
\end{proof}

It is shown in \cite[Sec.~8]{Gulerhyperbolicconvex} that every homogeneous
convex cone admits a hyperbolic barrier function. While not every hyperbolicity
cone is homogeneous, the preceding discussion illustrates a natural class of
examples where the hyperbolicity cone interacts well with cone-preserving
linear maps and semipositive cones.

\begin{example}
For the matrix $A$ in \Cref{example:1} one has
$A\bigl(\Lambda_+(f_A,1)\bigr)=\Lambda_+(f_A,1)$.
\end{example}

\begin{remark}
Semipositive cones provide a natural interface between hyperbolic geometry and
the classical theory of cone-preserving linear maps. On the one hand, for a
nonsingular semipositive matrix $A$, the cone
\[
  K_A = \Lambda_+(f_A,e)\cap\R^n_{\ge0}
\]
identifies precisely the part of the hyperbolicity cone that lives in the
positive orthant. On the other hand, $K_A$ is also the invariant cone of the
cone-preserving map $x\mapsto Ax$, so tools from Perron--Frobenius theory
($K$--irreducibility, spectral radius, invariant faces) can be brought to bear.
From the viewpoint of optimization, such semipositive cones naturally arise as
feasible regions given by the intersection of two proper convex cones (for
instance, a hyperbolicity cone and a standard modeling cone such as
$\R^n_{\ge0}$ or a second-order cone). Thus semipositive cones serve as a
bridge between the algebraic structure of determinantal hyperbolic polynomials
and the geometric and spectral properties of cone-preserving linear dynamics,
which is particularly useful when designing and analysing conic optimization
problems with hyperbolic barriers.
\end{remark}

\section{Stability Analysis via \texorpdfstring{$\K$}{}-Lorentzian polynomials} \label{sec:dynamic}
In this section we study \emph{cone-stability} of evolution variational
inequality (EVI) systems: stability of an equilibrium when the state is
constrained to lie in a closed convex cone $\K$. Even if an equilibrium is
unstable in the full space, it may become stable once trajectories are
confined to $\K$. Motivated by such examples, we focus on EVI systems as in
\cite{stabilitylevi,Henrionconecopositive} and recall the basic framework
needed to introduce $\K$-Lorentzian Lyapunov functions.

The notation $C^{0}([t_0,+\infty);\R^{d})$ denotes continuous functions
$[t_0,+\infty)\to\R^{d}$, and $L^{\infty}_{\mathrm{loc}}(t_0,+\infty;\R^{d})$
denotes locally essentially bounded functions $(t_0,+\infty)\to\R^{d}$. We
write $C^{1}(\R^{d};\R)$ for continuously differentiable scalar-valued
functions on $\R^{d}$.

\medskip\noindent
\textit{Class of dynamical systems.}
Let $\K\subset\R^{d}$ be a nonempty closed convex set, $A\in\R^{d\times d}$ a
matrix, and $F:\R^{d}\to\R^{d}$ a nonlinear operator. For
$(t_0,x_0)\in\R\times\K$, consider the problem $P(t_0,x_0)$: find
$x:[t_0,+\infty)\to\R^{d}$ such that
$x\in C^{0}([t_0,+\infty);\R^{d})$, $\dot x\in
L^{\infty}_{\mathrm{loc}}(t_0,+\infty;\R^{d})$ and
\begin{equation}
\begin{cases}
\langle \dot x(t)+Ax(t)+F(x(t)),\,v-x(t)\rangle \ge 0,
&\forall v\in\K,\ \text{a.e. }t\ge t_0,\\[2pt]
x(t)\in\K,& t\ge t_0,\\[2pt]
x(t_0)=x_0.
\end{cases}
\label{eq:ds}
\end{equation}
This is an \emph{evolution variational inequality} (EVI) system.

For a closed convex set $\K$, the normal cone at $x\in\K$ is
\[
  N_{\K}(x)
  \;:=\;
  \{y\in\R^{d}:\ \langle y,v-x\rangle \le 0\ \forall v\in\K\},
\]
and the tangent cone is its polar
\[
  T_{\K}(x)
  \;:=\;
  \{d\in\R^{d}:\ \langle d,y\rangle \le 0\ \forall y\in N_{\K}(x)\}
  =
  \mathrm{cl}\{\alpha(v-x): v\in\K,\ \alpha\ge 0\}.
\]
Standard convex analysis gives the equivalent form of \eqref{eq:ds}:
\[
\begin{cases}
\dot x(t)+Ax(t)+F(x(t)) \in -N_{\K}(x(t)),\\[2pt]
x(t)\in\K,\ t\ge t_0.
\end{cases}
\]

We recall existence and uniqueness for $P(t_0,x_0)$, a special case of
\cite[Theorem~2.1 (Kato), Cor.~2.2]{Goelevenstability}.

\begin{theorem}[\cite{stabilitylevi}]\label{them:exisuniqivp}
Let $\K\subset\R^{d}$ be nonempty, closed, convex, and let
$A\in\R^{d\times d}$. Suppose $F:\R^{d}\to\R^{d}$ admits a decomposition
\[
  F = F_1 + \Phi',
\]
where $F_1$ is Lipschitz continuous and $\Phi\in C^{1}(\R^{d};\R)$ is convex.
For given $t_0\in\R$ and $x_0\in\K$ there exists a unique solution
$x(\cdot;t_0,x_0)\in C^{0}([t_0,+\infty);\R^{d})$ with
$\dot x\in L^{\infty}_{\mathrm{loc}}(t_0,+\infty;\R^{d})$, right
differentiable on $[t_0,+\infty)$, satisfying \eqref{eq:ds}.
\end{theorem}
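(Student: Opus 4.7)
The plan is to recast the evolution variational inequality \eqref{eq:ds} as an ordinary differential inclusion driven by a maximal monotone operator plus a globally Lipschitz perturbation, and then invoke Kato's existence--uniqueness theorem in the form \cite[Theorem~2.1, Cor.~2.2]{Goelevenstability}. First I would rewrite \eqref{eq:ds} in normal-cone form: the defining property of $N_\K$ makes $\langle \dot x + Ax + F(x), v - x\rangle \ge 0$ for all $v \in \K$ equivalent to
\[
  \dot x(t) \in -Ax(t) - F_1(x(t)) - \Phi'(x(t)) - N_\K(x(t)),
  \qquad x(t)\in\K.
\]
Since $\Phi \in C^1(\R^d;\R)$ is convex and finite-valued on $\R^d$, its Fréchet derivative agrees with its convex subdifferential, so $\Phi'(x) = \partial\Phi(x)$. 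Applying the Moreau--Rockafellar sum rule (which holds because $\Phi$ is continuous on $\R^d$), I would rewrite the above as
\[
  \dot x(t) + G(x(t)) \in -T(x(t)),
  \qquad
  T := \partial(\Phi + I_\K),\quad
  G(x) := Ax + F_1(x),
\]
where $I_\K$ is the indicator of $\K$.

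Next I would verify the hypotheses of Kato's theorem for $T$ and $G$. The operator $T$ is maximal monotone as the subdifferential of a proper lower-semicontinuous convex function, with effective domain satisfying $\overline{\dom T} = \K$. The perturbation $G$ is globally Lipschitz as the sum of a bounded linear map and a Lipschitz nonlinearity. The initial condition $x_0 \in \K$ lies in $\overline{\dom T}$ as required. Kato's theorem then yields, for each $(t_0,x_0)$, a unique $x(\cdot;t_0,x_0) \in C^0([t_0,+\infty);\R^d)$ with $\dot x \in L^\infty_{\mathrm{loc}}(t_0,+\infty;\R^d)$, right differentiable on $[t_0,+\infty)$ and satisfying the inclusion almost everywhere, which is equivalent to \eqref{eq:ds}.

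Global existence over $[t_0,+\infty)$ would follow from the linear growth $\|G(x)\| \le c_1 + c_2\|x\|$ combined with a Gronwall-type a priori bound: testing \eqref{eq:ds} against any fixed $\bar v \in \K$ gives a differential inequality on $\tfrac{1}{2}\|x(t) - \bar v\|^2$ that rules out finite-time blow-up, so the local solution produced by Kato's theorem extends to $[t_0,+\infty)$.

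The main technical obstacle is justifying the sum rule $\partial(\Phi + I_\K) = \partial\Phi + N_\K$, which is exactly what allows the dynamics to be cast through a single maximal monotone operator $T$. The $C^1$ hypothesis on $\Phi$ makes it continuous on all of $\R^d$, so the Moreau--Rockafellar qualification is trivially satisfied; without this regularity one would need an extra constraint qualification such as $\inter(\dom\Phi) \cap \K \neq \emptyset$, potentially failing in general. This is precisely why the decomposition $F = F_1 + \Phi'$ with $\Phi \in C^1(\R^d;\R)$, rather than an abstract maximal monotone term, appears in the hypothesis.
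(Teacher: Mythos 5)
Your proposal is correct and follows essentially the same route as the paper, which does not prove this statement itself but quotes it as a special case of Kato's theorem from \cite[Theorem~2.1, Cor.~2.2]{Goelevenstability}: rewriting \eqref{eq:ds} in normal-cone form, absorbing $\Phi$ and the indicator of $\K$ into the maximal monotone operator $\partial(\Phi+I_{\K})$ via the Moreau--Rockafellar sum rule, and treating $Ax+F_1(x)$ as the Lipschitz perturbation is precisely the reduction used in the cited references. No gaps to report.
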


\begin{remark}
If in addition $0\in\K$ and $\langle F(0),v\rangle\ge 0$ for all $v\in\K$,
then $x(t;t_0,0)\equiv 0$ is the unique solution of $P(t_0,0)$. In what follows
we consider systems $P(t_0,x_0)$ satisfying the hypotheses of
\Cref{them:exisuniqivp}, together with $0\in\K$ and
$\langle F(0),v\rangle\ge0$ for all $v\in\K$, unless stated otherwise.
\end{remark}

We now recall stability notions for the equilibrium $x=0$ with respect to the
constraint set $\K$ \cite{stabilitylevi,Henrionconecopositive}.

\begin{definition}\label{def:constable}
The equilibrium $x=0\in\K$ is (Lyapunov) \emph{stable w.r.t.\ $\K$} if for any
$\epsilon>0$ there exists $\delta>0$ such that for any $x_0\in\K$ with
$\|x_0\|\le\delta$, the solution $x(t)\in\K$ of \eqref{eq:ds} satisfies
$\|x(t)\|<\epsilon$ for all $t\ge t_0$. It is \emph{asymptotically stable
w.r.t.\ $\K$} if it is stable and there exists $\delta>0$ such that
$\|x(t)\|\to0$ as $t\to\infty$ for all $x_0\in\K$ with $\|x_0\|\le\delta$.
\end{definition}

Throughout the paper we assume $\K$ is a proper convex cone; the results extend
to general closed convex sets $\K$ containing the origin
\cite{stabilitylevi}. We next recall abstract Lyapunov criteria for stability,
asymptotic stability, and instability w.r.t.\ $\K$ in terms of generalized
Lyapunov functions, cf.\ \cite{stabilitylevi}.

\begin{theorem}\label{them:dsstability}
Consider $P(t_0,x_0)$. Suppose there exist $\sigma>0$ and
$V\in C^{1}(\R^d;\R)$ such that
\begin{enumerate}
  \item $V(x)\ge a(\|x\|)$ for $x\in\K$, $\|x\|\le\sigma$, where
  $a:[0,\sigma]\to\R$ satisfies $a(t)>0$ for all $t\in(0,\sigma)$;
  \item $V(0)=0$;
  \item $x-\nabla V(x)\in\K$ for all $x\in\partial\K$ with $\|x\|\le\sigma$;
  \item $\langle Ax+F(x),\nabla V(x)\rangle \ge 0$ for all
  $x\in\K$, $\|x\|\le\sigma$.
\end{enumerate}
Then the trivial solution of $P(t_0,x_0)$ is stable w.r.t.\ $\K$.
\end{theorem}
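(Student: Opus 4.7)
The plan is to follow the standard Lyapunov approach adapted to variational inequalities: first show that $V$ is non-increasing along any solution $x(\cdot)$ of $P(t_0,x_0)$ as long as $\|x(t)\|\le\sigma$, and then deduce the $\epsilon$-$\delta$ stability estimate from conditions~(1) and (2).

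I would begin by noting that, since $V\in C^1(\R^d;\R)$ and $x$ is absolutely continuous on bounded intervals (as $\dot x\in L^{\infty}_{\mathrm{loc}}$), the composition $t\mapsto V(x(t))$ is locally absolutely continuous with $\tfrac{d}{dt}V(x(t))=\langle\nabla V(x(t)),\dot x(t)\rangle$ for a.e.~$t$. The heart of the proof is to show this derivative is nonpositive whenever $\|x(t)\|\le\sigma$. I split into two cases. If $x(t)\in\inter\K$, then $N_{\K}(x(t))=\{0\}$, so the variational inequality \eqref{eq:ds} forces $\dot x(t)=-Ax(t)-F(x(t))$, and condition~(4) gives $\tfrac{d}{dt}V(x(t))=-\langle\nabla V(x(t)),Ax(t)+F(x(t))\rangle\le 0$. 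If instead $x(t)\in\partial\K$ with $\|x(t)\|\le\sigma$, I use condition~(3) to take the admissible test vector $v=x(t)-\nabla V(x(t))\in\K$ in \eqref{eq:ds}, obtaining
\[
  \langle \dot x(t)+Ax(t)+F(x(t)),\,-\nabla V(x(t))\rangle \ge 0,
\]
which rearranges (using (4) once more) to $\langle\nabla V(x(t)),\dot x(t)\rangle\le -\langle\nabla V(x(t)),Ax(t)+F(x(t))\rangle\le 0$. Hence $V\circ x$ is non-increasing on any interval on which the trajectory stays in the $\sigma$-ball.

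For the stability conclusion, fix $\epsilon\in(0,\sigma)$. By (1), every $x\in\K$ with $\|x\|=\epsilon$ satisfies $V(x)\ge a(\epsilon)>0$. By (2) and continuity of $V$ at the origin, choose $\delta\in(0,\epsilon)$ such that $V(x_0)<a(\epsilon)$ for every $x_0\in\K$ with $\|x_0\|\le\delta$. Suppose for contradiction that $\|x(t)\|\ge\epsilon$ at some $t>t_0$; by continuity there is a first instant $t_{*}$ with $\|x(t_{*})\|=\epsilon$, and $\|x(t)\|\le\epsilon\le\sigma$ on $[t_0,t_{*}]$. The monotonicity step from Step~1 then yields $V(x(t_{*}))\le V(x_0)<a(\epsilon)\le V(x(t_{*}))$, a contradiction. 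Therefore $\|x(t)\|<\epsilon$ for all $t\ge t_0$, which is exactly $\K$-stability of the trivial solution in the sense of \Cref{def:constable}.

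The main obstacle is the boundary case in the monotonicity step: when $x(t)\in\partial\K$, the inclusion $\dot x+Ax+F(x)\in -N_{\K}(x)$ only gives information when tested against elements of $\K$, and one cannot choose $v-x=-\nabla V(x)$ unless some compatibility between $\nabla V$ and the local geometry of $\K$ is imposed. Condition~(3) is exactly this compatibility, making the gradient-type perturbation $x-\nabla V(x)$ an admissible test vector and converting the VI into a dissipation inequality for $V$; without it (or an equivalent tangential alignment of $\nabla V$ with $T_{\K}$ at the boundary) the Lyapunov argument collapses even though (1), (2) and (4) continue to hold.
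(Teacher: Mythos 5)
The paper does not prove this theorem; it is recalled from \cite{stabilitylevi} without proof, so there is no internal argument to compare against. Your proof is correct and is essentially the standard argument underlying the cited result: hypothesis (3) makes $v=x(t)-\nabla V(x(t))$ an admissible test vector in \eqref{eq:ds} (equivalently, $-\nabla V(x(t))\in T_{\K}(x(t))$ at boundary points), which combined with (4) gives $\tfrac{d}{dt}V(x(t))\le 0$ for a.e.\ $t$ while the trajectory remains in the $\sigma$-ball (the interior case being handled by $N_{\K}(x(t))=\{0\}$, or alternatively by testing with $x(t)-s\nabla V(x(t))$ for small $s>0$), and the first-crossing-time argument with (1)--(2) then yields the $\epsilon$--$\delta$ estimate; your use of $a$ only at the exact value $\|x(t_*)\|=\epsilon$ correctly avoids any unstated monotonicity or continuity assumption on $a$. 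The only cosmetic omission is the routine reduction of arbitrary $\epsilon>0$ to $\epsilon\in(0,\sigma)$ in Definition~\ref{def:constable}, which is standard.
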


\begin{theorem}\label{them:dsasymstability}
Consider $P(t_0,x_0)$. Suppose there exist $\lambda>0$, $\sigma>0$ and
$V\in C^{1}(\R^d;\R)$ such that
\begin{enumerate}
  \item $V(x)\ge a(\|x\|)$ for $x\in\K$, $\|x\|\le\sigma$, where
  $a:[0,\sigma]\to\R$ satisfies $a(t)\ge c t^\tau$ for all
  $t\in[0,\sigma]$, for some $c>0$, $\tau>0$;
  \item $V(0)=0$;
  \item $x-\nabla V(x)\in\K$ for all $x\in\partial\K$ with $\|x\|\le\sigma$;
  \item $\langle Ax+F(x),\nabla V(x)\rangle \ge \lambda V(x)$ for all
  $x\in\K$, $\|x\|\le\sigma$.
\end{enumerate}
Then the trivial solution of $P(t_0,x_0)$ is asymptotically stable w.r.t.\ $\K$.
\end{theorem}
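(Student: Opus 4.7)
My plan is to adapt the standard Lyapunov argument to the EVI setting by deriving an exponential-decay inequality $\frac{d}{dt}V(x(t))\le -\lambda V(x(t))$ along every trajectory, and then extracting the decay of $\|x(t)\|$ from the polynomial lower bound in hypothesis~(1).

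First I would note that the present hypotheses are strictly stronger than those of \Cref{them:dsstability}: the bound $a(t)\ge c\,t^{\tau}$ is positive on $(0,\sigma)$, so that earlier result already ensures stability of the trivial solution with respect to $\K$. In particular, there exists $\delta\in(0,\sigma]$ such that $\|x_0\|\le\delta$ implies $\|x(t)\|\le\sigma$ for every $t\ge t_0$, keeping the trajectory in the region where hypotheses~(3) and~(4) apply.

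I would then rewrite the EVI in normal-cone form $\dot x(t)=-Ax(t)-F(x(t))-n(t)$ a.e., where $n(t)\in N_{\K}(x(t))$ is a measurable selection supplied by the variational inequality. Since $x$ is absolutely continuous with $\dot x\in L^{\infty}_{\mathrm{loc}}$ and $V\in C^1$, the chain rule gives, for almost every $t\ge t_0$,
\[
  \frac{d}{dt}V(x(t))
  = -\langle Ax(t)+F(x(t)),\nabla V(x(t))\rangle
    - \langle n(t),\nabla V(x(t))\rangle.
\]
The normal-cone term vanishes on $\inter\K$, because $N_{\K}(x)=\{0\}$ there, and on $\partial\K$ the wedge condition~(3) provides $v:=x-\nabla V(x)\in\K$, so the defining inequality of $N_{\K}(x)$ evaluated at this $v$ reads $\langle n,v-x\rangle = \langle n,-\nabla V(x)\rangle\le 0$, i.e., $\langle n,\nabla V(x)\rangle\ge 0$. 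Combined with hypothesis~(4), this yields $\frac{d}{dt}V(x(t))\le -\lambda V(x(t))$ almost everywhere.

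Gronwall's lemma then gives $V(x(t))\le V(x_0)\,e^{-\lambda(t-t_0)}$, and combining with $V(x)\ge c\|x\|^{\tau}$ from~(1) produces
\[
  \|x(t)\|\le \Bigl(\frac{V(x_0)}{c}\Bigr)^{1/\tau} e^{-\lambda(t-t_0)/\tau}\longrightarrow 0,
\]
so the trivial solution is attractive in addition to being stable, which is precisely asymptotic stability with respect to $\K$. The main technical obstacle is to justify the chain rule together with the measurable normal-cone selection $n(t)$ at the times when $x(t)\in\partial\K$; this is standard in the EVI framework once $\dot x\in L^{\infty}_{\mathrm{loc}}$ and the single-valued $C^1$ gradient $\nabla V$ are available (cf.\ \Cref{them:exisuniqivp} and \cite{stabilitylevi}), so it amounts to careful bookkeeping rather than a new difficulty.
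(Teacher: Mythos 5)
Your proof is correct and is essentially the standard Lyapunov argument for this result: the paper itself gives no proof of \Cref{them:dsasymstability} (it is recalled from \cite{stabilitylevi}), and your route — first stability via \Cref{them:dsstability}, then the normal-cone form of the EVI with the wedge condition~(3) giving $\langle n(t),\nabla V(x(t))\rangle\ge 0$, hence $\frac{d}{dt}V(x(t))\le-\lambda V(x(t))$, Gronwall, and the lower bound $a(t)\ge ct^\tau$ to extract decay of $\|x(t)\|$ — is exactly the argument of the cited source. The only bookkeeping worth making explicit is that $n(t):=-\dot x(t)-Ax(t)-F(x(t))$ is itself the required measurable ``selection,'' so no separate selection theorem is needed.
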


\begin{remark}
Condition (3) implies $-\nabla V(x)\in T_{\K}(x)$ for all
$x\in\partial\K$, $\|x\|\le\sigma$, i.e.\ the negative gradient points
tangentially into $\K$.
\end{remark}

We now recall Lyapunov stability of matrices on $\K$ \cite{stabilitylevi}.

\begin{definition}\label{def:Lyapunovstable}
A matrix $A\in\R^{n\times n}$ is \emph{Lyapunov semi-stable} (resp.\ \emph{Lyapunov
positive stable}) on $\K$ if there exists $P\in\R^{n\times n}$ such that
\begin{enumerate}[(a)]
  \item $\displaystyle\inf_{x\in\K\setminus\{0\}}
          \frac{x^{\mathsf T}Px}{\|x\|^2} > 0$;
  \item $\langle Ax,[P+P^{\mathsf T}]x\rangle \ge 0$ (resp.\ $>0$)
        for all $x\in\K$;
  \item $x\in\partial\K \implies (I-[P+P^{\mathsf T}])x\in\K$.
\end{enumerate}
\end{definition}

Let $\mathcal{P}_{\K}$ (resp.\ $\mathcal{P}^{+}_{\K}$) denote the cone of
$\K$–copositive (resp.\ strictly $\K$–copositive) matrices and
\[
  \mathcal{P}^{++}_{\K}
  :=
  \Bigl\{P\in\R^{n\times n}:
    \inf_{x\in\K\setminus\{0\}} \frac{x^{\mathsf T}Px}{\|x\|^{2}} > 0
  \Bigr\}.
\]
Define
\[
  \mathcal{L}_{\K}
  :=
  \{A\in\R^{n\times n}:\ \exists P\in\mathcal{P}^{++}_{\K}
    \text{ such that } (I-[P+P^{\mathsf T}])\partial\K\subset\K,\ 
    A^{\mathsf T}P+PA\in\mathcal{P}_{\K}\}
\]
and
\[
  \mathcal{L}^{++}_{\K}
  :=
  \{A\in\R^{n\times n}:\ \exists P\in\mathcal{P}^{++}_{\K}
    \text{ such that } (I-[P+P^{\mathsf T}])\partial\K\subset\K,\ 
    A^{\mathsf T}P+PA\in\mathcal{P}^{+}_{\K}\}.
\]

\begin{remark}
The condition $\langle Ax,[P+P^{\mathsf T}]x\rangle > 0$ for all $x\in\K$
is equivalent to $A^{\mathsf T}P+PA\in\mathcal{P}^{+}_{\K}$. If $A$ is (classically)
strictly stable (all eigenvalues with positive real part in the convention of
\cite{stabilitylevi}), then by the Lyapunov theorem there exists $P\succ 0$
with $A^{\mathsf T}P+PA=Q\succ 0$, so (a) and (b) above hold automatically.
\end{remark}

\begin{lemma}[\cite{Goelevenstability,stabilitylevi}]
If $\K$ is a proper convex cone, then
$\mathcal{P}^{++}_{\K} = \mathcal{P}^{+}_{\K}$.
\end{lemma}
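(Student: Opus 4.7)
This is a standard compactness-on-the-sphere argument, and the claimed equality reduces to showing that strict positivity of a continuous homogeneous function on a proper cone upgrades automatically to uniform positivity once one restricts to the unit sphere. The forward inclusion $\mathcal{P}^{++}_{\K}\subseteq \mathcal{P}^{+}_{\K}$ is essentially by definition: if $\inf_{x\in\K\setminus\{0\}} x^{\mathsf T}Px/\|x\|^{2} > 0$, then in particular the numerator $x^{\mathsf T}Px$ is strictly positive on $\K\setminus\{0\}$, so $P\in\mathcal{P}^{+}_{\K}$.

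For the reverse inclusion, I would fix $P\in\mathcal{P}^{+}_{\K}$ and exploit that $\K$ is proper and hence in particular closed. Consider the set $S_{\K}:=\K\cap S^{n-1}$, where $S^{n-1}$ denotes the Euclidean unit sphere in $\R^{n}$. Since $\K$ is closed and $S^{n-1}$ is compact, $S_{\K}$ is a compact subset of $\R^{n}$; it is nonempty because $\K$ has nonempty interior. The map $x\mapsto x^{\mathsf T}Px$ is continuous, so by the extreme value theorem it attains its minimum on $S_{\K}$ at some point $x^{\star}\in S_{\K}$. By strict $\K$-copositivity, $(x^{\star})^{\mathsf T}Px^{\star} > 0$, so this minimum is a strictly positive real number $m := (x^{\star})^{\mathsf T}Px^{\star} > 0$.

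To conclude, I would invoke homogeneity. Every nonzero $x\in\K$ can be written as $x = \|x\|\,\hat x$ with $\hat x := x/\|x\| \in S_{\K}$, since $\K$ is a cone. Hence
\[
  \frac{x^{\mathsf T}Px}{\|x\|^{2}}
  \;=\;
  \hat x^{\mathsf T}P\hat x
  \;\ge\; m
  \;>\; 0
  \qquad\text{for all } x\in\K\setminus\{0\},
\]
so $\inf_{x\in\K\setminus\{0\}} x^{\mathsf T}Px/\|x\|^{2} \ge m > 0$, which is exactly $P\in\mathcal{P}^{++}_{\K}$. Combining both inclusions gives the claimed equality.

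The argument has no real obstacle: the only ingredients are closedness of $\K$ (which is built into the definition of a proper convex cone in the paper), compactness of $S^{n-1}$, continuity of the quadratic form, and positive homogeneity of $x\mapsto x^{\mathsf T}Px/\|x\|^{2}$. If anything, the only thing worth being careful about is to make sure we use that $\K$ is closed rather than merely convex, since otherwise $S_{\K}$ need not be compact and the infimum on the sphere need not be attained.
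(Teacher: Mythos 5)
Your proof is correct. The paper itself states this lemma as a quoted result from \cite{Goelevenstability,stabilitylevi} and gives no proof, so there is nothing internal to compare against; your compactness-and-homogeneity argument (minimum of the continuous quadratic form on the compact set $\K\cap S^{n-1}$, then $0$-homogeneity of the Rayleigh quotient) is exactly the standard proof of this fact, and you correctly identify closedness of the proper cone $\K$ as the essential hypothesis making the infimum on the sphere attained.
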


In the special case $F\equiv 0$, problem $P(t_0,x_0)$ reduces to the linear
EVI (LEVI) system
\begin{equation}
\begin{cases}
\langle \dot x(t)+Ax(t), v-x(t)\rangle \ge 0, &\forall v\in\K,\ \text{a.e. }t\ge t_0,\\[2pt]
x(t)\in\K, & t\ge t_0,\\[2pt]
x(t_0)=x_0.
\end{cases}
\label{eq:levi}
\end{equation}
Lyapunov stability of $A$ w.r.t.\ $\K$ then characterizes stability of the
trivial solution, see \cite[Theorem~5]{stabilitylevi}.

\begin{theorem}[\cite{stabilitylevi}]\label{them:copoconestable}
\begin{enumerate}[(a)]
  \item If $A$ is $\K$–copositive, then $A$ is Lyapunov semi-stable on $\K$.
  \item If $A$ is strictly $\K$–copositive, then $A$ is Lyapunov positive
  stable on $\K$.
\end{enumerate}
\end{theorem}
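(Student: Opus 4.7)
The plan is to exhibit an explicit Lyapunov matrix $P$ satisfying the three conditions (a)--(c) of \Cref{def:Lyapunovstable}. The natural guess, motivated by the identity $x^{\mathsf T}Px = x^{\mathsf T}A x$ one wants to extract from condition (b), is simply $P = \tfrac{1}{2}I$. This is the simplest choice that (i) is symmetric and positive definite on all of $\R^n$ (not merely on $\K$), and (ii) makes $P+P^{\mathsf T}=I$, so that the quadratic coupling with $A$ collapses to $x^{\mathsf T}Ax$.

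First, I would verify condition (a): with $P=\tfrac12 I$,
\[
  \inf_{x\in\K\setminus\{0\}} \frac{x^{\mathsf T}Px}{\|x\|^{2}}
  \;=\; \inf_{x\in\K\setminus\{0\}} \frac{\tfrac12\|x\|^2}{\|x\|^2}
  \;=\; \tfrac12 \;>\; 0,
\]
so $P\in\mathcal{P}^{++}_{\K}$. Next, for condition (b), since $P+P^{\mathsf T}=I$ I would compute
\[
  \langle Ax,(P+P^{\mathsf T})x\rangle
  \;=\; \langle Ax,x\rangle
  \;=\; x^{\mathsf T}A^{\mathsf T}x
  \;=\; x^{\mathsf T}Ax,
\]
where the last equality uses that a scalar equals its transpose. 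Under the hypothesis of part (a), $\K$-copositivity of $A$ gives $x^{\mathsf T}Ax\ge 0$ on $\K$, hence $A^{\mathsf T}P+PA\in\mathcal{P}_{\K}$; under the hypothesis of part (b), strict $\K$-copositivity gives $x^{\mathsf T}Ax>0$ on $\K\setminus\{0\}$, hence $A^{\mathsf T}P+PA\in\mathcal{P}^{+}_{\K}$. Finally, condition (c) is trivial: for any $x\in\partial\K$,
\[
  (I-[P+P^{\mathsf T}])x \;=\; (I-I)x \;=\; 0 \;\in\; \K,
\]
since $\K$ is a cone containing the origin.

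There is essentially no obstacle here: the whole content is the observation that the Lyapunov quadratic form based on $P=\tfrac12 I$ reduces exactly to the copositivity form $x^{\mathsf T}Ax$, with the tangential condition (c) satisfied vacuously. The one point worth flagging is that the theorem allows arbitrary (not necessarily symmetric) $A$; this is handled by the scalar identity $x^{\mathsf T}A^{\mathsf T}x = x^{\mathsf T}Ax$, which absorbs the asymmetry into the symmetric part $\tfrac12(A+A^{\mathsf T})$ without changing the sign conclusions. Thus both (a) and (b) follow from the same choice of $P$ by inspecting the three conditions separately, and no appeal to spectral properties of $A$ (as in the classical Lyapunov theorem recalled in the remark before the lemma) is needed.
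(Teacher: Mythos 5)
Your proposal is correct and follows essentially the same route as the paper: both take $P=\tfrac{1}{2}I$, note that conditions (a) and (b) of \Cref{def:Lyapunovstable} reduce to the (strict) $\K$-copositivity of $A$, and observe that condition (c) holds trivially since $(I-[P+P^{\mathsf T}])x=0\in\K$. Your write-up merely fills in the computations the paper calls ``immediate'' (and, incidentally, states the last identity with $I$ rather than the paper's typographical $A$), so there is nothing to add.
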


\begin{proof}
(a) Take $P=\tfrac{1}{2}I$. Conditions (a)–(b) in
\Cref{def:Lyapunovstable} are immediate, and
$(I-[P+P^{\mathsf T}])x=0\in\K$ for all $x$, so $A \in\mathcal{L}_{\K}$.
(b) The same choice $P=\tfrac{1}{2}I$ works when $A$ is strictly
$\K$–copositive.
\end{proof}

\begin{theorem}[\cite{stabilitylevi}]\label{them:LEVIstability}
Let $\K$ be a proper convex cone and consider the LEVI system
\eqref{eq:levi}. Then:
\begin{enumerate}[(a)]
  \item If $A\in\mathcal{L}_{\K}$, the trivial solution is stable w.r.t.\ $\K$.
  \item If $A\in\mathcal{L}^{++}_{\K}$, the trivial solution is
  asymptotically stable w.r.t.\ $\K$.
\end{enumerate}
\end{theorem}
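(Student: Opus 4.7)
The plan is to apply the abstract Lyapunov criteria (\Cref{them:dsstability} and \Cref{them:dsasymstability}) with $F\equiv 0$ and with the quadratic Lyapunov candidate $V(x)=x^{\mathsf T}Px$, where $P\in\mathcal{P}^{++}_{\K}$ is the matrix furnished by the hypothesis $A\in\mathcal{L}_{\K}$ (resp.\ $A\in\mathcal{L}^{++}_{\K}$). All four hypotheses of the abstract criteria then translate directly into the three defining conditions of $\mathcal{L}_{\K}$ (and $\mathcal{L}^{++}_{\K}$).

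First I would verify the positivity and regularity conditions. Clearly $V\in C^{1}(\R^d;\R)$ with $V(0)=0$ and $\nabla V(x)=(P+P^{\mathsf T})x$. Since $P\in\mathcal{P}^{++}_{\K}$, the number $\mu:=\inf_{x\in\K\setminus\{0\}}\|x\|^{-2}x^{\mathsf T}Px$ is strictly positive, so $V(x)\ge \mu\|x\|^{2}$ on $\K$ and condition (1) of both theorems holds with $a(t)=\mu t^{2}$ (in particular $\tau=2$, $c=\mu$). The tangential condition (3) is immediate from the definition of $\mathcal{L}_{\K}$: for $x\in\partial\K$ we have $x-\nabla V(x)=(I-[P+P^{\mathsf T}])x\in\K$ by the assumption $(I-[P+P^{\mathsf T}])\partial\K\subset\K$.

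Next I would check the dissipativity condition (4), which is where the Lyapunov matrix inequality enters. A short computation using $(x^{\mathsf T}PAx)^{\mathsf T}=x^{\mathsf T}A^{\mathsf T}P^{\mathsf T}x$ gives the symmetrization identity
\[
  \langle Ax,\nabla V(x)\rangle
  = x^{\mathsf T}A^{\mathsf T}(P+P^{\mathsf T})x
  = x^{\mathsf T}(A^{\mathsf T}P+PA)x.
\]
For part (a), $A\in\mathcal{L}_{\K}$ gives $A^{\mathsf T}P+PA\in\mathcal{P}_{\K}$, so the right-hand side is nonnegative on $\K$; this is exactly condition (4) of \Cref{them:dsstability} with $F\equiv 0$, and Lyapunov stability w.r.t.\ $\K$ follows. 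For part (b), $A\in\mathcal{L}^{++}_{\K}$ gives $A^{\mathsf T}P+PA\in\mathcal{P}^{+}_{\K}$, and by the lemma $\mathcal{P}^{++}_{\K}=\mathcal{P}^{+}_{\K}$, so
\[
  2\mu_{1}
  \;:=\;
  \inf_{x\in\K\setminus\{0\}}\frac{x^{\mathsf T}(A^{\mathsf T}P+PA)x}{\|x\|^{2}}
  \;>\;0.
\]
Combining this with the upper bound $x^{\mathsf T}Px\le\|P\|_{\mathrm{op}}\|x\|^{2}$ yields
\[
  \langle Ax,\nabla V(x)\rangle
  \;\ge\;
  2\mu_{1}\|x\|^{2}
  \;\ge\;
  \lambda\,V(x),
  \qquad
  \lambda:=\frac{2\mu_{1}}{\|P\|_{\mathrm{op}}}>0,
\]
for all $x\in\K$, which is condition (4) of \Cref{them:dsasymstability}. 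Applying that theorem yields asymptotic stability w.r.t.\ $\K$.

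The only subtle step is the passage from strict copositivity of $A^{\mathsf T}P+PA$ to the \emph{linear} lower bound $\langle Ax,\nabla V(x)\rangle\ge\lambda V(x)$ required by the asymptotic-stability criterion, and this is precisely where the cited identity $\mathcal{P}^{++}_{\K}=\mathcal{P}^{+}_{\K}$ is essential: on a general closed convex set strict copositivity would only give a strict pointwise inequality on $\K\setminus\{0\}$, not a uniform quadratic lower bound. Apart from this, the argument is a direct translation of the algebraic conditions defining $\mathcal{L}_{\K}$ and $\mathcal{L}^{++}_{\K}$ into the hypotheses of the abstract Lyapunov theorems, with no need for any global stability of $A$ in the full space.
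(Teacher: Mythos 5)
Your proof is correct: the paper itself states \Cref{them:LEVIstability} as an imported result from \cite{stabilitylevi} without giving a proof, and your argument is exactly the standard one underlying that citation, namely taking $V(x)=x^{\mathsf T}Px$ with $P\in\mathcal{P}^{++}_{\K}$, checking conditions (1)--(3) of \Cref{them:dsstability}/\Cref{them:dsasymstability} directly from the definition of $\mathcal{L}_{\K}$, and using the symmetrization identity $\langle Ax,(P+P^{\mathsf T})x\rangle=x^{\mathsf T}(A^{\mathsf T}P+PA)x$ together with $\mathcal{P}^{+}_{\K}=\mathcal{P}^{++}_{\K}$ to get the uniform bound $\langle Ax,\nabla V(x)\rangle\ge\lambda V(x)$ needed for asymptotic stability. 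No gaps; your remark about where the equality $\mathcal{P}^{+}_{\K}=\mathcal{P}^{++}_{\K}$ (valid since $\K$ is a proper cone, by compactness of $\K\cap S^{n-1}$) is genuinely needed is exactly the right point of care.
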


\begin{proposition}\label{prop:clcconstable}
If the quadratic form $x^{\mathsf T}Ax$ is (strictly) $\K$-Lorentzian,
then the trivial solution of the LEVI
system \eqref{eq:levi} is (asymptotically) stable w.r.t.\ $\K$.
\end{proposition}

\begin{proof}
If $x^{\mathsf T}Ax$ is $\K$-Lorentzian (resp.\ strictly $\K$-Lorentzian),
then the symmetric unique matrix representation $A$ is same up to a factor of $2$ as the Hessian of the quadratic form. Hence $A$ is $\K$-copositive (resp.\ strictly $\K$-copositive) by
\Cref{cor:copositive}. By \Cref{them:copoconestable}, $A$ is then Lyapunov
semi-stable (resp.\ Lyapunov positive stable), and the claim follows from
\Cref{them:LEVIstability}.
\end{proof}

We briefly recall one example from \cite[Example~1]{stabilitylevi}. 
In $2$D, with
\[
  A =
  \begin{bmatrix}
    1 & 2\\
    1 & 1
  \end{bmatrix},
  \qquad
  \K = \R^2_{\ge0},
\]
one has $\mathrm{eig}(A)=\{1+\sqrt{2},\,1-\sqrt{2}\}$, so $0$ is unstable in
the unconstrained system, but becomes asymptotically stable for the LEVI system
on $\K$. 
We now illustrate \Cref{prop:clcconstable} and the $\K$–Lorentzian viewpoint with a three-dimensional example.

\begin{example}\label{ex:levi}
Consider the quadratic form
\[
f(x_1,x_2,x_3)
= x_1^2 + 4x_1x_2 + x_2^2 + 4x_1x_3 + 4x_2x_3 - x_3^2
= x^{\mathsf T} A x,
\]
where
\[
A=
\begin{bmatrix}
1 & 2 & 2\\
2 & 1 & 2\\
2 & 2 & -1
\end{bmatrix}.
\]
Since one coefficient of $x_3^2$ is negative, \(f\) is not Lorentzian on \(\R^3_{\ge 0}\). In particular, \(f\) is not positive on \(\R^3_{>0}\). Fix \(v=(1,1,1)^{\mathsf T}\). The matrix \(A\) is symmetric with signature \((+,-,-)\), and \(f(v)=v^{\mathsf T}Av=13>0\). Hence \(f\) is hyperbolic with respect to \(v\), see \cite{Deycomplete}. Therefore, the hyperbolicity cone \(K(f,v)\) (the connected component of \(\{x\in\R^3:f(x)>0\}\) containing \(v\)) is a convex cone. Moreover, \(\{x:f(x)>0\}\) has exactly two connected components, each of which is a convex cone.

Consequently,
\[
K_+(f,v):=K(f,v)\cap \R^3_{\ge 0}
\]
is a proper convex cone (as an intersection of convex cones), as illustrated in \Cref{fig:3Dnonhypcones}.

Now consider the LEVI system \eqref{eq:levi} with the above matrix \(A\) and cone \(K_+(f,v)\). MATLAB simulations indicate that trajectories initialized in \(K_+(f,v)\) converge to the origin, illustrating stability with respect to the cone \(K_+(f,v)\), even though the linear system is unstable in the full space \(\R^3\). The cone and representative trajectories are shown in \Cref{fig:3Dnonhypcones}.
\begin{figure}[htbp]
\begin{center}
\begin{minipage}{0.50\linewidth}
  \begin{center}
    \includegraphics[scale=0.37]{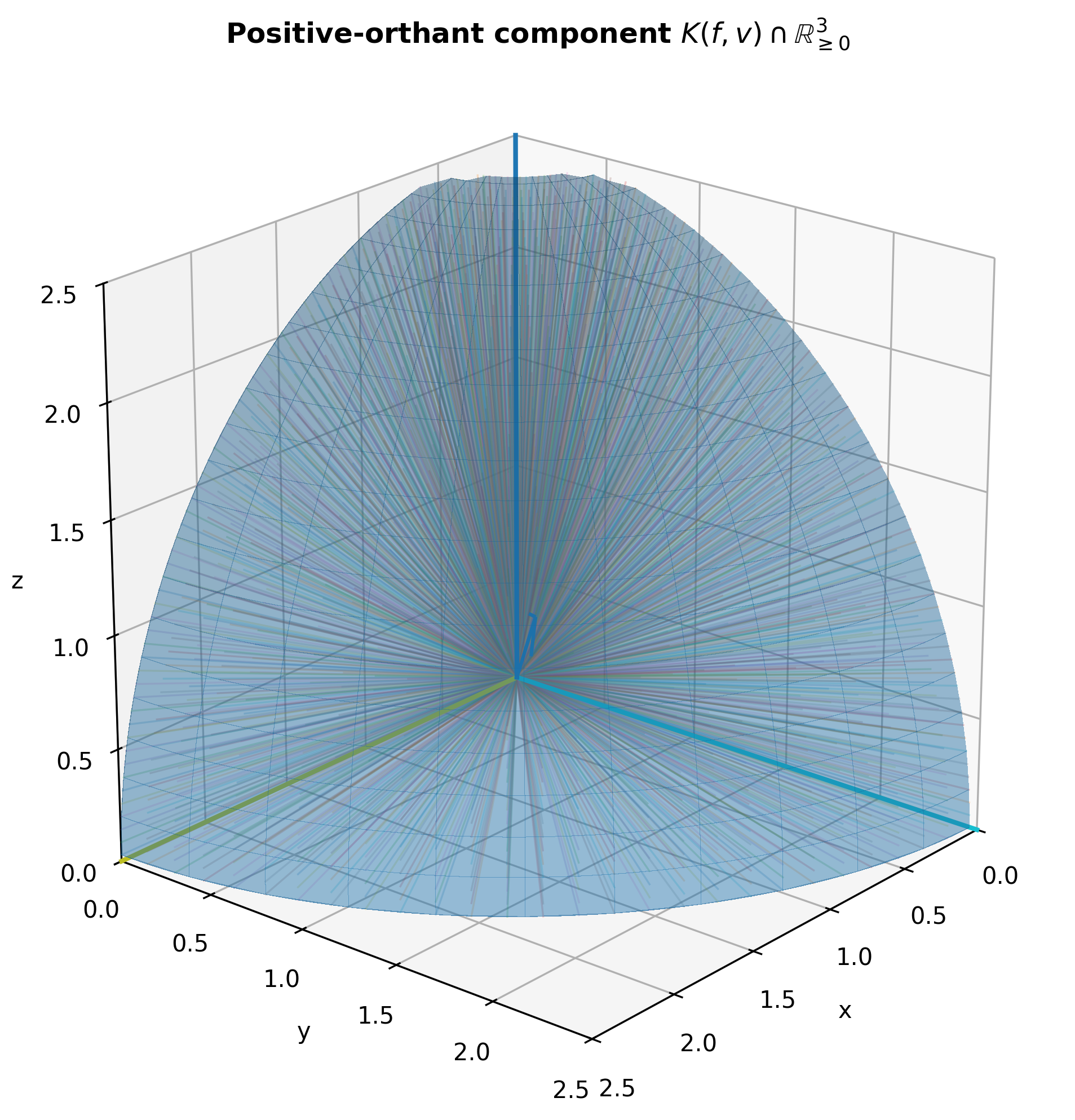}
    \caption{$\K(f,v)\cap\R^3_{\ge0}$.}
    \label{fig:3Dnonhypcones}
  \end{center}
\end{minipage}
\hfill
\begin{minipage}{0.4\linewidth}
  \begin{center}
    \includegraphics[scale=0.22]{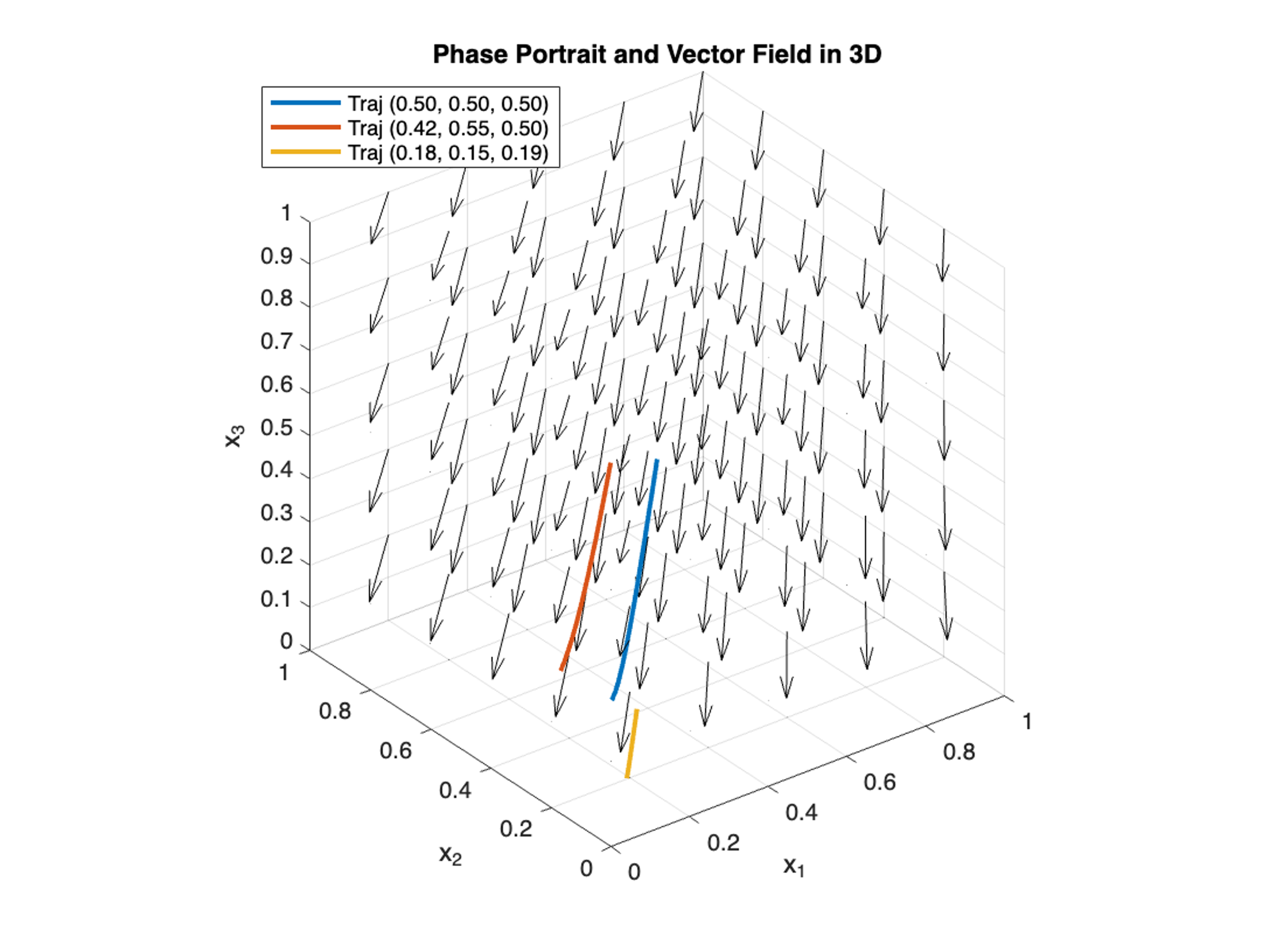}
    \caption{Stability of the origin w.r.t.\ $\K$.}
  \end{center}
\end{minipage}
\end{center}
\end{figure}
\end{example}
\begin{proposition}\label{prop:levi-hyperbolic-cone-stability}
Let \(A\in\R^{n\times n}\), \(A_s:=\tfrac12(A+A^{\mathsf T})\), and 
\[
f(x):=x^{\mathsf T}Ax=x^{\mathsf T}A_sx.
\]
Assume:
\begin{enumerate}[(i)]
    \item \(A_s\) has exactly one positive eigenvalue;
    \item \(A_s\) is \(\K\)-copositive (resp.\ strictly \(\K\)-copositive).
\end{enumerate}
then the trivial solution of the LEVI
system \eqref{eq:levi} is (asymptotically) stable w.r.t.\ $\K$.

\end{proposition}
\begin{proof} 
By (i), \(A_s=\tfrac12(A+A^{\mathsf T})\) has exactly one positive eigenvalue and since $A_s$ is copositive, so there exists \(v\in\R^n\) with  \(f(v)=v^{\mathsf T}A_s v>0\), so \(f(x)=x^{\mathsf T}Ax=x^{\mathsf T}A_sx\) is hyperbolic with respect to \(v\) and \(\K=K(f,v)\) is a convex cone. Hence $f$ is (strictly) \(\K\)-Lorentzian on \(\K=K(f,v)\). On the other hand, taking \(P=\tfrac12 I\) gives
\[
I-(P+P^{\mathsf T})=0
\quad\text{and}\quad
A^{\mathsf T}P+PA=A_s\in \mathcal P_{\K}\ \ (\text{resp.\ }\mathcal P_{\K}^{+}).
\]  Thus, \(A\in\mathcal L_{\K}\) (resp.\ \(A\in\mathcal L_{\K}^{++}\)).
Consequently, by \Cref{them:LEVIstability}, the trivial solution of \eqref{eq:levi} is stable (resp.\ asymptotically stable) with respect to \(\K\).
\end{proof}
\begin{theorem}\label{them:levi-hyperbolic-cone-stability}
If the quadratic form $f(x):=x^{\mathsf T}A_sx$ is $\K$ Lorentzian polynomial
then the trivial solution of the class of LEVI
systems \eqref{eq:levi} governed by any $A$ such that  $f(x):=x^{\mathsf T}A_sx=x^{\mathsf T}Ax$ is (asymptotically) stable w.r.t.\ $\K$.
\end{theorem}
\begin{proof}
Note that the symmetric representative $A_s$ is unique in $f(x):=x^{\mathsf T}A_sx=x^{\mathsf T}Ax$, but there is a class of matrices $\{A_s+N: \ \ \ N^{\mathsf T}=-N\}$ for which $f(x)=x^{\mathsf T}Ax$. Therefore by $\K$-Lorentzian property one can conclude the dynamics about a class of LEVI systems.
The proof follows from \Cref{prop:clcconstable} and \Cref{prop:levi-hyperbolic-cone-stability}.
\end{proof}
\begin{remark}
This criterion is particularly powerful because it reduces cone-based (asymptotic) stability of the LEVI system to a verifiable spectral condition on the symmetric part \(A_s=\tfrac12(A+A^{\mathsf T})\) (one positive eigenvalue) together with \(\K\)-copositivity, or equivalently \(\K\)-Lorentzianity of the induced quadratic form, without requiring \(A\) itself to be symmetric.
\end{remark}
\noindent \textbf{Acknowledgment}:
The author is grateful to June E. Huh for identifying issues in the earlier examples concerning the non-convexity of $K(f,v)$ and the LEVI system; these observations substantially improved the manuscript. During the preparation of this paper, the author also used ChatGPT to improve clarity and conciseness of the writing. All content was subsequently reviewed, revised, and verified by the author, who takes full responsibility for the final version of the article.

\bibliographystyle{alpha}
\bibliography{main}
\end{document}